\documentclass[10pt]{amsart}

 \usepackage{amssymb, latexsym}

\usepackage{hyperref}

\hypersetup{
unicode=true, 
pdftoolbar=true, 
pdfmenubar=true, 
pdffitwindow=true, 
pdftitle={ spectral theory for the weak decay of muons in a uniform magnetic field} 
pdfauthor={jean-claude guillot}, 
pdfsubject={article}, 
pdfnewwindow=true, 
pdfkeywords={polonium, P6}, 
colorlinks=true, 
linkcolor=blue, 
citecolor=blue, 
filecolor=blue, 
urlcolor=blue 
}
\hypersetup{
linktocpage=true
}




\DeclareMathOperator*{\slim}{s-lim}

\DeclareMathOperator*{\produ}{\prod}

\def\C{{\mathbb{C}}} 
\def\N{{\mathbb{N}}} 
\def\R{{\mathbb{R}}} 

\def\vp{\varphi}

\def\1{{\mathbf{1}}}

\renewcommand\d{\mathrm{d}}

\def\1{1\hskip-0.09cm{\rm l}}


\newtheorem{theorem}{Theorem}[section]

\newtheorem{proposition}[theorem]{Proposition}

\newtheorem{hypothesis}[theorem]{Hypothesis}

\newcommand{\arxivlink}[1]{\href{http://arxiv.org/abs/#1}{arXiv:#1}}

\begin{document}
\title[Weak Interactionsin an uniform magnetic field]{ spectral theory for the weak decay of muons in a uniform magnetic field}

\author[J.-C. Guillot]{Jean-Claude Guillot}

\address[J.-C. Guillot]{CMAP, Ecole polytechnique, CNRS, Institut Polytechnique de Paris, 91128 Palaiseau, France}

\email{Jean-Claude.Guillot@polytechnique.edu}



\subjclass[2010]{81V15, 81V10, 81Q10}

\keywords{Weak Decay of muons,Fermi's theory, Uniform Magnetic Field, Spectral Theory,}

\maketitle



\begin{abstract}
In this article we consider a mathematical model for the weak decay of muons in a uniform magnetic field according to the Fermi theory of weak interactions with V-A coupling. With this model we associate a Hamiltonian with cutoffs in an appropriate Fock space. No infrared regularization is assumed. The Hamiltonian is self-adjoint and has a unique ground state. We specify the essential spectrum and prove the existence of asymptotic fields from which we determine the absolutely continuous spectrum. The coupling constant is supposed sufficiently small.
\end{abstract}

\maketitle
\tableofcontents
\section{Introduction.}
\setcounter{equation}{0}

In this paper we consider a mathematical model for the weak decay of muons into electrons, neutrinos and antineutrinos in a uniform magnetic field according to the Fermi theory with V-A (Vector-Axial Vector)coupling,
\begin{equation}\label{1}
   \mu_{-}\rightarrow e_{-} + \overline{\nu_{e}} + \nu_{\mu}
\end{equation}
\begin{equation}\label{2}
  \mu_{+}\rightarrow e_{+} + \nu_{e} + \overline{\nu}_{\mu}
\end{equation}

\eqref{2} is the charge conjugation of \eqref{1}.

This is a part of a program devoted to the study of mathematical models for the weak interactions as patterned according to the Fermi theory and the Standard model in Quantum Field Theory. See \cite{GM}.

In this paper we restrict ourselves to the study of the decay of the muon $\mu_{-}$ whose electric charge is the charge of the electron \eqref{1}. The study of the decay of the antiparticle $\mu_{+}$, whose charge is positive, \eqref{2}  is quite similar and we omit it.

In \cite{G17} we have studied the spectral theory of the Hamiltonian associated to the inverse $\beta$ decay in a uniform magnetic field. We proved the existence and uniqueness of a ground state and we specify the essential spectrum and the spectrum for a small coupling constant and without any low-energy regularization.

In this paper we consider the weak decay of muons into electrons, neutrinos  associated with muons and  antineutrinos associated with electrons in a uniform magnetic field according to the Fermi theory with V-A coupling. Hence we neglect the small mass of neutrinos and antineutrinos and we define a total Hamiltonian H acting in an appropriate Fock space involving three fermionic massive particles-the electrons, the muons and the antimuons - and two fermionic massless particles- the neutrinos and the antineutrinos associated to the muons and the electrons respectively.In order to obtain a well-defined operator, we approximate the physical kernels of the interaction Hamiltonian by square integrable functions and we introduce high-energy cutoffs. We do not need to impose any low-energy regularization in this work but the coupling constant is supposed sufficiently small.

We give a precise definition of the Hamiltonian as a self-adjoint operator in the appropriate Fock space and by adapting the methods used in \cite{G17}we first state  that H has a unique ground state and we specify the essential spectrum for sufficiently small values of the coupling constant.

In this paper, our main result is the location of the absolutely continuous spectrum of H. For that we follow the first step of the approach to scattering theory in establishing, for each involved particle,the existence and basic properties of the asymptotic creation and annihilation operators for time $t$ going to $\pm\infty$. We then have a natural definition of unitary wave operators with the right intertwining property from which we deduce the absolutely continuous spectrum of H. Scattering theory for models in Quantum Field Theory without any external field has been considered by many authors. See, among others,\cite{Amm, BFS, DG, FGS,H}, \cite{HK1, HK2, HK3,HK4, HK5, HS, KM1, KM2, KM3}, \cite{T1,T2,BDH}and references therein. A part of the techniques used in this paper are adapted from the ones developed in these references. Note that the asymptotic completeness of the wave operators is an open problem in the case of the weak interactions in the background of a uniform magnetic field. See \cite{AF} for a study of scattering theory for a mathematical model of the weak interactions without any external field.

In some parts of our presentation we will only give the statement of theorems referring otherwise to some references.

The paper is organized as follows. In the second section we define the regularized self-adjoint Hamiltonian associated to \eqref{1}. In the third section we consider the existence of a unique ground state and we specify the essential spectrum of H. In the fourth section we carefully prove the existence of asymptotic limits, when time t goes to $\pm\infty$, of the creation and annihilation operators of each involved particle, we define a unitary wave operator and we prove that it satisfies the right intertwining property with the hamiltonian and we deduce the absolutely continuous spectrum of H. In Appendices A and B we recall the Dirac quantized fields associated to the muon and the electron in a uniform magnetic external field together with the Dirac quantized free fields associated to the neutrino and the antineutrino.

\section{The Hamiltonian.}\mbox{}
\setcounter{equation}{0}

In the Fermi theory the decay of the muon $\mu$ is described by the following four fermions effective Hamiltonian for the interaction in the Schr\"{o}dinger picture ( see \cite{GM}, \cite{GS} and \cite{WII}):
\begin{equation}\label{2.1}
  \begin{split}
  &H_{int} =\\&\frac{G_F}{\sqrt{2}} \int \d^3 \!x\,\big( \overline{\Psi}_{\nu_\mu}(x) \gamma^\alpha
  (1-\gamma_5)\Psi_{\mu}(x)\big)\big( \overline{\Psi}_{e}(x) \gamma_{\alpha}(1-\gamma_5)\Psi_{\nu_e}(x)\big)\\
  & + \frac{G_F}{\sqrt{2}} \int \d^3 \!x\,\big( \overline{\Psi}_{\nu_e}(x) \gamma_{\alpha}(1-\gamma_5)\Psi_{e}(x)\big)
  \big( \overline{\Psi}_{\mu}(x) \gamma^\alpha(1-\gamma_5)\Psi_{\nu_\mu}(x)\big)
 \end{split}
\end{equation}
Here $\gamma^{\alpha}$, $\alpha=0,1,2,3$ and $\gamma_5$ are the Dirac matrices in the standard representation. $\Psi_{(.)}(x)$ and $\overline{\Psi}_{(.)}(x)$ are the quantized Dirac
fields for $e$, $\mu$, $\nu_\mu$ and $\nu_e$. $\overline{\Psi}_{(.)}(x)= \Psi_{(.)}(x)^\dag \gamma^{0}$. $G_{F}$ is the Fermi coupling constant with $G_{F}\simeq 1.16639(2)\times10^{-5}GeV^{-2}$. See \cite{B}.

We recall that $m_e<m_\mu$. $\nu_\mu$ and $\nu_e$ are massless particles.

\subsection{The free Hamiltonian.}\mbox{}

Throughout this work notations are introduced in appendices A and B.

Let

\begin{equation}\label{2.2}
 \mathfrak{F}=\mathfrak{F}_e\otimes\mathfrak{F}_\mu\otimes\mathfrak{F}_\mu\otimes\mathfrak{F}_{\nu_\mu}\otimes\mathfrak{F}_{\overline{\nu}_e}
\end{equation}

Let
\begin{equation}\label{2.3}
  \begin{split}
    &\omega(\xi_1)=E_n^{(e)}(p^3)\quad for\  \xi_1= (s,n,p^1,p^3) \\
    &\omega(\xi_2)=E_n^{(\mu)}(p^3)\quad for\  \xi_2= (s,n,p^1,p^3) \\
    &\omega(\xi_3)=|\textbf{p}|\quad for\  \xi_3= (\textbf{p} , \frac{1}{2})\\
    &\omega(\xi_4)= |\textbf{p}|\quad for\  \xi_4= (\textbf{p} , -\frac{1}{2}).
    \end{split}
\end{equation}

Let $H_D^{(e)}$ \big( resp.$H_D^{(\mu_-)}$,$H_D^{(\mu_+)}$, and $H_D^{(\nu)}$ \big) be the Dirac Hamiltonian for the electron \big(resp.the muon, the antimuon and the neutrino \big).

The quantization of  $H_D^{(e)}$, denoted by $H_{0,D}^{(e)}$ and acting on $\mathfrak{F}_{e}$, is given by
\begin{equation}\label{2.4}
  H_{0,D}^{(e)}==\int \omega(\xi_1) b^{*}_{+}(\xi_{1}) b_{+}(\xi_{1})\d \xi_{1}
\end{equation}

 Likewise the quantization of $H_D^{(\mu_-)}$,$H_D^{(\mu_+)}$,$H_D^{(\overline{\nu}_e)}$ and $H_D^{(\nu_\mu)}$, denoted by $H_{0,D}^{(\mu)}$, $H_{0,D}^{(\overline{\nu}_e)}$ and $H_{0,D}^{(\nu_\mu)}$ respectively, acting on $\mathfrak{F}_\mu$, $\mathfrak{F}_{\overline{\nu}_e}$ and  $\mathfrak{F}_{\nu_\mu}$ respectively, is given by
\begin{equation}\label{2.5}
  \begin{split}
    &H_{0,D}^{(\mu_-)}=\int \omega(\xi_2) b^{*}_{+}(\xi_{2}) b_{+}(\xi_{2})\d \xi_{2}\, \\
    &H_{0,D}^{(\mu_+)}=\int \omega(\xi_2) b^{*}_{-}(\xi_{2}) b_{-}(\xi_{2})\d \xi_{2}\, \\
    &H_{0,D}^{(\overline{\nu}_e)}=\int \omega(\xi_3) b^{*}_{-}(\xi_{3}) b_{-}(\xi_{3})\d \xi_{3} \\
    &H_{0,D}^{(\nu_\mu)}= \int \omega(\xi_4) b^{*}_{+}(\xi_{4}) b_{+}(\xi_{4})\d \xi_{4}.
    \end{split}
\end{equation}

We set $H_{0,D}^{(\mu)}$ = $H_{0,D}^{(\mu_-)}\otimes \1 $ +$ \1 \otimes H_{0,D}^{(\mu_+)}$.  $H_{0,D}^{(\mu)}$ is defined on $\mathfrak{F}_\mu \otimes\mathfrak{F}_\mu$.

For each Fock space $\mathfrak{F}_{.}$ let $\mathfrak{D}^{(.)}$ denote the set of vectors $\Phi\in\mathfrak{F}_{.}$ for which each component $\Phi^{(r)}$  is smooth and has a compact support and $\Phi^{(r)} = 0$ for all but finitely many $r$. Then $H_{0,D}^{(.)}$  is
well-defined on the dense subset $\mathfrak{D}^{(.)}$  and it is essentially self-adjoint on $\mathfrak{D}^{(.)}$ . The
self-adjoint extension will be denoted by the same symbol $H_{0,D}^{(.)}$  with domain $D(H_{0,D}^{(.)})$).

The spectrum of $H_{0,D}^{(e)}$ in $\mathfrak{F}_{(e)}$ is given by
\begin{equation}\label{2.6}
  \mathrm{spec}(H_{0,D}^{(e)})=\{0\}\cup [m_e,\infty)
\end{equation}
$\{0\}$ is a simple eigenvalue whose the associated eigenvector is the vacuum in  $\mathfrak{F}^{(e)}$ denoted by $\Omega^{(e)}$. $[m_e,\infty)$ is the absolutely continuous spectrum of $H_{0,D}^{(e)}$.

Likewise the spectra of $H_{0,D}^{(\mu)}$, $H_{0,D}^{(\overline{\nu}_e)}$ and $H_{0,D}^{(\nu_\mu)}$ in $\mathfrak{F}_{\mu}\otimes \mathfrak{F}_{\mu}$, $\mathfrak{F}_{\overline{\nu_e}}$ and $\mathfrak{F}_{\nu_\mu}$ respectively  are given by
\begin{equation}\label{2.7}
  \begin{split}
    &\mathrm{spec}(H_{0,D}^{(\mu)})=\{0\}\cup [m_{\mu},\infty) \\
    &\mathrm{spec}(H_{0,D}^{(\overline{\nu_e})})=[0,\infty)\\
    &\mathrm{spec}(H_{0,D}^{(\nu_\mu)})=[0,\infty).
  \end{split}
\end{equation}
 $\Omega^{(\mu)}$, $\Omega^{(\overline{\nu}_e)}$ and  $\Omega^{(\nu_\mu)}$ are the associated vacua in  $\mathfrak{F}_{\mu}\otimes \mathfrak{F}_{\mu}$, $\mathfrak{F}_{\overline{\nu_e}}$ and  $\mathfrak{F}_{\nu_\mu}$ respectively and are the associated eigenvectors of  $H_{0,D}^{(\mu)}$,$H_{0,D}^{(\overline{\nu}_e)}$ and $H_{0,D}^{(\nu_\mu)}$ respectively for the eigenvalue $\{0\}$.

The vacuum in $\mathfrak{F}$, denoted by $\Omega$, is then given by
\begin{equation}\label{2.8}
  \Omega=\Omega^{(e)} \otimes \Omega^{(\mu)} \otimes \Omega^{(\overline{\nu_e})} \otimes \Omega^{(\nu_\mu)}
\end{equation}

The free Hamiltonian for the model, denoted by $H_0$ and acting in $\mathfrak{F}$, is now given by
\begin{equation}\label{2.9}
  \begin{split}
  &H_0=H_{0,D}^{(e)} \otimes \1 \otimes \1 \otimes \1 \otimes \1 + \1 \otimes H_{0,D}^{(\mu)}\otimes \1 \otimes\1 \otimes 1 \\
  & + \1 \otimes \1 \otimes \1 \otimes  H_{0,D}^{(\overline{\nu_e})}\otimes \1 + \1 \otimes \1 \otimes \1\otimes \1 \otimes H_{0,D}^{(\nu_\mu)}.
  \end{split}
\end{equation}

$H_0$ is essentially self-adjoint on $\mathfrak{D}=\mathfrak{D}^{(e)}\widehat\otimes\mathfrak{D}^{(\mu)}\widehat\otimes\mathfrak{D}^{(\overline{\nu_e})}\widehat\otimes\mathfrak{D}^{(\nu_\mu)}$.

Here $\widehat\otimes$ is the algebraic tensor product.

$\mathrm{spec}(H_0)=[0,\infty)$ and $\Omega$ is the eigenvector associated with the simple eigenvalue $\{0\}$ of $H_0$.

Let $S^{(e)}$ be the set of the thresholds of $H_{0,D}^{(e)}$:
$$S^{(e)}=\big(s^{(e)}_n; n\in \N \big)$$
with $s^{(e)}_n=\sqrt{m_{e}^2 + 2neB}$.

Likewise let  $S^{(\mu)}$ be the set of the thresholds of $H_{0,D}^{(\mu)}$:
$$S^{(\mu)}=\big(s^{(\mu)}_n; n\in \N \big)$$
with $s^{(\mu)}_n=\sqrt{m_{\mu}^2 + 2neB}$.

Then
\begin{equation}\label{2.10}
  \mathfrak{S}= S^{(e)}\cup S^{(\mu)}
\underline{}\end{equation}
is the set of the thresholds of $H_0$.

Throughout this work any finite tensor product of annihilation or creation operators associated with the involved particles will be denoted for shortness by the usual product of the operators (see e.g \eqref{2.13} and \eqref{2.14}).
\subsection{The Interaction.}\mbox{}

Similarly to \cite{G17},\cite{BG},\cite{ABFG},\cite{G15},\cite{BFG1},\cite{BFG2}and \cite{BFG3} in order to get well-defined operators on $\mathfrak{F}$, we have to substitute
smoother kernels $F(\xi_2,\xi_4)$ and $G(\xi_1,\xi_3)$ for the $\delta$-distribution associated with \eqref{2.1}( conservation of momenta) and for introducing ultraviolet cutoffs.

Let
\begin{equation}\label{2.11}
  \textbf{r}= \textbf{p}_3 + \textbf{p}_4
\end{equation}

We get a new operator denoted by $H_I$ and defined as follows
\begin{equation}\label{2.12}
  H_I= H_I^1 + (H_I^1)^{*} + H_I^{2} + (H_I^{2})^{*}
\end{equation}

Here

\begin{equation}\label{2.13}
\begin{split}
  &H^{(1)}_{I}= \int \d \xi_1 \d \xi_2 \d \xi_3 \d \xi_4\,\Big(\int \d x^2  \mathrm{e}^{-ix^2r^2} \\
  &\big( \overline{U}^{(\nu_\mu)}(\xi_4) \gamma^\alpha(1-\gamma_5)U^{(\mu)}(x^2,\xi_2)\big)\big( \overline{U}^{(e)}(x^2,\xi_1) \gamma_{\alpha}(1-\gamma_5)U^{(\nu)}(\xi_4)\big)\Big)\\
  &F(\xi_{2}, \xi_4)\,G(\xi_1,\xi_3)b_{+}^{*}(\xi_4)b_{+}^{*}(\xi_1)b_{-}^{*}(\xi_3)b_{+}(\xi_2).
  \end{split}
\end{equation}

and

\begin{equation}\label{2.14}
  \begin{split}
  &H^{(2)}_{I}= \int \d \xi_1 \d \xi_2 \d \xi_3 \d \xi_4\,\Big(\int \d x^2  \mathrm{e}^{-ix^2r^2} \\
  &\big( \overline{U}^{(\nu_\mu)}(\xi_4) \gamma_{\alpha}(1-\gamma_5)W^{(\mu)}(x^2,\xi_2)\big)\big( \overline{U}^{(e)}(x^2,\xi_1) \gamma^\alpha(1-\gamma_5)W^{(\overline{\nu_e})}(\xi_3)\big)\Big)\\
  &F(\xi_{2}, \xi_{4})\,G(\xi_1,\xi_3)b_{+}^{*}(\xi_4)b_{-}^{*}(\xi_2)b^{*}_{+}(\xi_1)b^{*}_{-}(\xi_3).
  \end{split}
\end{equation}

$H^{(1)}_{I}$ describes the decay of the muon and $H^{(2)}_{I}$ is responsible for the fact that the bare vacuum will not be an eigenvector of the total Hamiltonian as expected from physics.

We now introduce the following assumptions on the kernels $F(\xi_{2}, \xi_{4})$ and $G(\xi_1,\xi_3)$ in order to get well-defined Hamiltonians in $\mathfrak{F}$.

\begin{hypothesis}\label{hypothesis:2.1}
\begin{equation}\label{2.15}
\begin{split}
&F(\xi_2, \xi_4)\in
 L^2(\Gamma_1\times \R^3) \\
 &G^(\xi_1, \xi_3)\in
 L^2(\Gamma_1\times \R^3)
\end{split}
\end{equation}
\end{hypothesis}
These assumptions will be needed throughout the paper.

By \eqref{2.12}-\eqref{2.15} $H_I$ is well defined as a  sesquilinear form on $\mathfrak{D}$ and one can construct a closed operator associated with this form.

The total Hamiltonian is thus

\begin{equation}\label{2.16}
  H= H_0 + gH_I, \quad g>0.
\end{equation}
$g$ is the coupling constant that we suppose non-negative for simplicity. The conclusions below are not affected if $g\in\R$.

The self-adjointness of $H$ is established by the next theorem.

Let

\begin{equation}\label{2.17}
  \begin{split}
     C= &\|\gamma^0\gamma_\alpha (1-\gamma_5)\|_{\C^4}\|\gamma^0\gamma^\alpha (1-\gamma_5)\|_{\C^4}. \\
    \frac{1}{M}=&\frac{1}{m_e} + \frac{1}{m_\mu}
   \end{split}
\end{equation}

For $\phi \in D(H_0)$ we have

 \begin{equation}\label{2.18}
  \begin{split}
   \|H_I\phi\|  &  \\
     \leq & 2C\|F(.,.)\|_{L^2(\Gamma_1\times\R^3)}\|G(.,.)\|_{L^2(\Gamma_1\times\R^3)}\big(\frac{2}{M}\|H_0\phi\| + \|\phi\| \big).
  \end{split}
\end{equation}

\eqref{2.18} follows from standard estimates of creation and annihilation operators in Fock space (the $N_\tau$ estimates, see \cite{GJ}). Details can be found in \cite[proposition 3.7]{BDG}.

\begin{theorem}\label{2.2}(Self-adjointness).
Let $g_0>0$ be such that
\begin{equation}\label{2.19}
4g_0\frac{C}{M}\|F(.,.)\|_{L^2(\Gamma_1\times\R^3)}\|G(.,.)\|_{L^2(\Gamma_1\times\R^3)}<1.
\end{equation}
Then for any g such that $g\leq g_0$ H is self-adjoint in $\mathfrak{F}$ with domain $\mathcal{D}(H)= \mathcal{D}(H_0)$. Moreover any core for $H_0$ is a core for $H$.
\end{theorem}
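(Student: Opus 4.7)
The plan is a direct application of the Kato--Rellich theorem, with all the real work already packaged into estimate \eqref{2.18}. I would proceed as follows.

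First, I would observe that $H_I$ is symmetric on $\mathfrak{D}$: by its very definition \eqref{2.12}, $H_I = H_I^{(1)} + (H_I^{(1)})^* + H_I^{(2)} + (H_I^{(2)})^*$, so the sesquilinear form is manifestly symmetric, and \eqref{2.18} shows that the associated closed operator has $\mathcal{D}(H_0)$ contained in its domain and is therefore symmetric there as well. Second, from \eqref{2.18} I would extract the relative bound: for $\phi\in\mathcal{D}(H_0)$,
\begin{equation*}
\|gH_I\phi\| \leq \left(4g\frac{C}{M}\|F\|_{L^2(\Gamma_1\times\R^3)}\|G\|_{L^2(\Gamma_1\times\R^3)}\right)\|H_0\phi\| + \left(2gC\|F\|_{L^2(\Gamma_1\times\R^3)}\|G\|_{L^2(\Gamma_1\times\R^3)}\right)\|\phi\|.
\end{equation*}
Hypothesis \eqref{2.19} says precisely that the coefficient in front of $\|H_0\phi\|$ is strictly smaller than $1$ whenever $g\leq g_0$, so $gH_I$ is infinitesimally close to being $H_0$-bounded with relative bound $<1$.

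Third, since $H_0$ is self-adjoint on $\mathcal{D}(H_0)$ (established in Section 2.1 via the quantization of the Dirac Hamiltonians on each Fock factor), the Kato--Rellich theorem applies: $H = H_0 + gH_I$ is self-adjoint on $\mathcal{D}(H_0)$, essentially self-adjoint on any core of $H_0$, and in particular on $\mathfrak{D}$. This gives both conclusions of the theorem simultaneously.

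The only genuinely non-trivial input is estimate \eqref{2.18}; once that is in hand, Kato--Rellich is mechanical. I would not reprove \eqref{2.18} since the paper explicitly delegates it to the $N_\tau$-estimate machinery of \cite{GJ} and to \cite[Proposition 3.7]{BDG}. The one small point worth flagging is that \eqref{2.18} was stated for $\phi\in\mathcal{D}(H_0)$ while $H_I$ was initially defined only as a form on $\mathfrak{D}$; I would briefly note that by a density argument (using that $\mathfrak{D}$ is a core for $H_0$ and that the right-hand side of \eqref{2.18} is continuous in the graph norm of $H_0$), the closed operator associated with the form has $\mathcal{D}(H_0)\subset\mathcal{D}(H_I)$, which is what is needed to legitimately write $H_0+gH_I$ as an operator sum on $\mathcal{D}(H_0)$.
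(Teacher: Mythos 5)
Your proposal is correct and follows exactly the same route as the paper, which likewise states that self-adjointness is an immediate consequence of the Kato--Rellich theorem once the relative bound \eqref{2.18} and the smallness condition \eqref{2.19} are in place. The extra care you take in checking symmetry of $H_I$ and in bridging the form-on-$\mathfrak{D}$ definition to an operator bound on $\mathcal{D}(H_0)$ is a reasonable elaboration of what the paper leaves implicit.
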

By \eqref{2.18} and \eqref{2.19} the proof of the self-adjointness of $H$ follows from the Kato-Rellich theorem.

$\sigma(H)$ stands for the spectrum and $\sigma_{ess}(H)$ denotes the essential spectrum. We have
\begin{theorem}\label{2.3}(The essential spectrum and the spectrum)
Setting
\begin{equation*}
    E=\inf\sigma(H)
\end{equation*}
we have for every $g\leq g_0$
\begin{equation*}
    \sigma(H)=\sigma_{\mathrm{ess}}(H)= [E,\infty)
\end{equation*}
with $E\leq 0$ .
\end{theorem}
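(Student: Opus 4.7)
The plan is to establish $E\le 0$, the tautological inclusion $\sigma(H)\subset[E,\infty)$, and $[E,\infty)\subset\sigma_{\mathrm{ess}}(H)$, the last being the main content.

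For $E\le 0$: inspect $\langle\Omega,H\Omega\rangle$. Since $H_0\Omega=0$, it remains to check $\langle\Omega,H_I\Omega\rangle=0$. Of the four summands of $H_I$ in \eqref{2.12}, three ($H_I^{(1)}$, $(H_I^{(1)})^*$, and $(H_I^{(2)})^*$) annihilate $\Omega$ because each contains an annihilation operator of a species absent from the vacuum; the fourth, $H_I^{(2)}$, creates four particles and so lands in a sector orthogonal to $\Omega$. Thus $\langle\Omega,H\Omega\rangle=0$, giving $E\le 0$.

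For the essential-spectrum inclusion I construct a Weyl sequence at every $E+\mu$, $\mu>0$, exploiting the massless $\nu_\mu$-neutrino. Let $\phi_E$ be the normalised ground state provided by Section 3 (standard number-estimate arguments also give $\phi_E\in D(N_{\nu_\mu}^{1/2})$). Pick $h\in L^2$ with $\|h\|=1$ sharply supported in a thin shell $\{|\mathbf p|\in(\mu-\varepsilon,\mu+\varepsilon)\}$, and translate it spatially, $h_n(\mathbf p):=e^{-ia_n\cdot\mathbf p}h(\mathbf p)$ with $|a_n|\to\infty$, so that $h_n\rightharpoonup 0$, $\|h_n\|=1$, and $\|(\omega-\mu)h_n\|<\varepsilon$. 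Set $\Phi_n:=b^{*}_{+}(h_n)\phi_E$. Since $\phi_E\in D(N_{\nu_\mu}^{1/2})$, one has $b_+(h_n)\phi_E\to 0$ strongly, so $\|\Phi_n\|\to 1$ and $\Phi_n\rightharpoonup 0$. Using $[H_0,b^{*}_{+}(h_n)]=b^{*}_{+}(\omega h_n)$ and commuting $b^{*}_{+}(h_n)$ through each summand of $H_I$ via the CAR, one obtains
\[
(H-E-\mu)\Phi_n=b^{*}_{+}((\omega-\mu)h_n)\phi_E+g\bigl(B_1(h_n)+B_2(h_n)\bigr)\phi_E+R_n,
\]
where $B_j(h_n)$ are contraction terms coming from $\{b_+(\xi_4),b^{*}_{+}(h_n)\}=h_n(\xi_4)$ in $(H_I^{(j)})^*$, bilinear in $h_n$ and in the $L^2$-kernel $F$. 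The first term has norm $\le\|(\omega-\mu)h_n\|<\varepsilon$; the contraction terms tend to zero strongly by Cauchy--Schwarz and Riemann--Lebesgue applied to $\int F(\cdot,\xi_4)h_n(\xi_4)\,d\xi_4$ (using $F\in L^2$ and $h_n\rightharpoonup 0$).

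The main obstacle is the residue $R_n=-2g\,b^{*}_{+}(h_n)H_I\phi_E$, a byproduct of the fermionic structure: each of $H_I^{(1)}$ and $H_I^{(2)}$ contains exactly one $\nu_\mu$-creation operator and therefore \emph{anticommutes} (rather than commutes) with $b^{*}_{+}(h_n)$ across the $\nu_\mu$-factor. Since $\|b^{*}_{+}(h_n)\psi\|\to\|\psi\|$ for any fixed $\psi$ (because $b_+(h_n)\psi\to 0$), $R_n$ does not vanish in norm on its own. To absorb it, I would dress the creation operator: replace $b^{*}_{+}(h_n)$ by $b^{*}_{+}(h_n)+g\,X(h_n)$ with a first-order correction $X(h_n)$ built from the interaction vertex so that $[H,b^{*}_{+}(h_n)+gX(h_n)]=\omega\bigl(b^{*}_{+}(h_n)+gX(h_n)\bigr)$ modulo $O(g^2)$ when applied to $\phi_E$. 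Iterating this Cook-type procedure for $g\le g_0$ small, or equivalently invoking the asymptotic field $b^{*}_{+,\mathrm{out}}(h_n)$ whose full construction is carried out in Section 4, produces a genuine approximate eigenvector. Once $\|(H-E-\mu)\Phi_n\|\to 0$ with $\Phi_n\rightharpoonup 0$, one has $E+\mu\in\sigma_{\mathrm{ess}}(H)$ for every $\mu>0$; by closedness of $\sigma_{\mathrm{ess}}(H)$ one concludes $[E,\infty)\subset\sigma_{\mathrm{ess}}(H)\subset\sigma(H)\subset[E,\infty)$, completing the proof.
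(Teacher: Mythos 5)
Your computation of $\langle\Omega,H\Omega\rangle=0$ is correct and the Weyl-sequence strategy built on the massless $\nu_\mu$ is indeed the standard route (and essentially what the references \cite{BFG3}, \cite{A}, \cite{T3} cited by the paper carry out). However, there is a genuine error in the central step: the residue $R_n=-2g\,b^{*}_{+}(h_n)H_I\phi_E$ does not exist. You argue that $H_I^{(1)}$ and $H_I^{(2)}$ anticommute with $b^{*}_{+}(h_n)$ because each contains a single $\nu_\mu$-creation operator $b^{*}_{+}(\xi_4)$, but you have forgotten that $b^{*}_{+}(h_n)$ also anticommutes with the three remaining fermionic operators $b^{*}_{+}(\xi_1)$, $b^{*}_{-}(\xi_3)$, $b_{+}(\xi_2)$ of the other species (the paper adopts the Weinberg convention that operators of different species anticommute; see the remark after \eqref{A.4}). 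Each $H_I^{(j)}$ is therefore a product of four fermionic operators, hence a \emph{bosonic} operator, and moving $b^{*}_{+}(h_n)$ through it picks up the sign $(-1)^4=+1$. Since the $\nu_\mu$ operator in $H_I^{(j)}$ is a creation operator, there is no contraction either, so $[H_I^{(1)},b^{*}_{+}(h_n)]=[H_I^{(2)},b^{*}_{+}(h_n)]=0$ exactly; this is precisely what the paper records in \eqref{4.73} and what is implicit in \eqref{4.64}--\eqref{4.66}. Consequently
\[
(H-E-\mu)\Phi_n=b^{*}_{+}\bigl((\omega-\mu)h_n\bigr)\phi_E
 +g\bigl[(H_I^{(1)})^{*}+(H_I^{(2)})^{*},\,b^{*}_{+}(h_n)\bigr]\phi_E,
\]
which already vanishes in the limit by your own estimates on $\|(\omega-\mu)h_n\|$ and on the contraction $\int F(\cdot,\xi_4)h_n(\xi_4)\,\d\xi_4$. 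The ``dressing'' and appeal to the asymptotic fields of Section~4 are therefore superfluous; worse, they would import Hypotheses~4.1--4.2, while Theorem~\ref{2.3} is asserted under Hypothesis~2.1 alone.

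A second, smaller, issue: Theorem~\ref{2.3} is stated for $g\le g_0$ under Hypothesis~2.1 only, so you may not assume an exact ground state $\phi_E$ (Theorem~\ref{3.1} needs Hypothesis~3.1 and $g\le g_1$). This is easily repaired by replacing $\phi_E$ with a normalised $\phi_\varepsilon\in D(H)\cap D(N_{\nu_\mu}^{1/2})$ satisfying $\|(H-E)\phi_\varepsilon\|<\varepsilon$, which suffices for the Weyl-sequence argument. With that replacement and the corrected commutator, the proof closes without the Cook-type iteration.
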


In order to prove the theorem \ref{2.3} we easily adapt to our case the proof given in \cite{BFG3} (see also \cite{G17}, \cite{A} and \cite{T3}). The mathematical model considered in \cite{BFG3} involves also one neutrino and one antineutrino. We omit the details.

\section{Existence of a unique ground state.}

In the sequel we shall make some of the following additional assumptions on the kernels $F(\xi_2,\xi_4)$ and $G(\xi_1,\xi_3)$.

\begin{hypothesis}\label{hypothesis:3.1} There exists a constant  $K(F,G)>0$
such that for $\sigma>0$
\begin{equation*}
\begin{split}
  \mbox{(i)}\quad &
\int_{\Gamma_1\times\R^3}
 \frac{|
 F(\xi_2,\xi_4)|^2}{|\textbf{p}_{4}|^2}
 \d\xi_1 \d\xi_4  <\infty .
\end{split}
\begin{split}
 \mbox{(ii)}\quad &
 \int_{\Gamma_1\times\R^3}
 \frac{|
 G(\xi_1,\xi_3)|^2}{|\textbf{p}_{3}|^2}
 \d\xi_1 \d\xi_3  <\infty .
\end{split}
\end{equation*}
\begin{equation*}
\begin{split}
\mbox{(iii)}\quad &
 \left(\int_{\Gamma_1\times\{|\textbf{p}_{4}|\leq \sigma\}}
 |F(\xi_2,\xi_4)|^2
 \d\xi_2 \d\xi_4 \right)^{\frac{1}{2}} \leq K(F,G) \sigma.
\end{split}
\end{equation*}
\begin{equation*}
\begin{split}
\mbox{(iv)}\quad &
 \left(\int_{\Gamma_1\times\{|\textbf{p}_{3}|\leq \sigma\}}
 |G(\xi_1,\xi_3)|^2
 \d\xi_1 \d\xi_3 \right)^{\frac{1}{2}} \leq K(F,G) \sigma.
\end{split}
\end{equation*}
\end{hypothesis}

We then have

\begin{theorem}\label{3.1}
Assume that the kernels $F(.,.)$ and $G(.,.)$ satisfy Hypothesis 2.1 and 3.1. Then there exists $g_1\in (0,g_0]$ such that $H$ has a unique ground state for $g \leq g_1$.
\end{theorem}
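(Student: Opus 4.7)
The plan is to follow the standard infrared cutoff strategy developed in \cite{G17,BFG3}. Since the neutrino and antineutrino are massless, the infimum $E$ of $\sigma(H)$ is not isolated from the essential spectrum, so a direct perturbative argument is unavailable; one instead constructs the ground state as a weak limit of ground states of auxiliary Hamiltonians $H_\sigma$ with an infrared cutoff on the massless momenta, and the role of Hypothesis \ref{hypothesis:3.1} is precisely to keep this weak limit nonzero.

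For $\sigma>0$, introduce the cutoff kernels $F_\sigma=F\,\1_{\{|\textbf{p}_4|\geq\sigma\}}$ and $G_\sigma=G\,\1_{\{|\textbf{p}_3|\geq\sigma\}}$, and let $H_\sigma=H_0+gH_{I,\sigma}$ be the corresponding total Hamiltonian. By Theorem \ref{2.2} applied with $F_\sigma,G_\sigma$, $H_\sigma$ is self-adjoint on $\mathcal{D}(H_0)$, and \eqref{2.18} applied to the differences $F-F_\sigma,\,G-G_\sigma$ yields norm resolvent convergence $H_\sigma\to H$, hence $E_\sigma:=\inf\sigma(H_\sigma)\to E$. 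Since $F_\sigma,G_\sigma$ vanish below $\sigma$, each neutrino Fock space factors as $\mathfrak{F}_{\nu_\mu}=\mathfrak{F}_{\nu_\mu}^{\geq\sigma}\otimes\mathfrak{F}_{\nu_\mu}^{<\sigma}$ (and similarly for $\overline{\nu}_e$), so $H_\sigma$ decomposes as $\widetilde H_\sigma\otimes\1+\1\otimes R_\sigma$, where $R_\sigma$ acts only on the soft neutrino subspaces with infimum $0$ and unique vacuum. The operator $\widetilde H_\sigma$ enjoys a spectral gap of order at least $\min(\sigma,m_e)$ above its ground energy, and a standard compactness argument (momentum discretization, extraction of a weak limit of finite-dimensional ground states, preservation of the eigenvalue by the gap) produces a unique normalized ground state $\widetilde\Phi_\sigma$ for $g$ small, as in \cite{G17,BFG3}; then $\Phi_\sigma:=\widetilde\Phi_\sigma\otimes\Omega^{<\sigma}$ is a ground state of $H_\sigma$.

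The heart of the proof is a soft-mode estimate on $\Phi_\sigma$ uniform in $\sigma$, obtained via the pull-through formula. Applied to the eigenvalue equation $H_\sigma\Phi_\sigma=E_\sigma\Phi_\sigma$ for the massless annihilation operator $b_-(\xi_3)$ (and symmetrically for $b_+(\xi_4)$), it yields
\begin{equation*}
b_-(\xi_3)\Phi_\sigma=-g\,(H_\sigma-E_\sigma+\omega(\xi_3))^{-1}\,[b_-(\xi_3),H_{I,\sigma}]\,\Phi_\sigma,
\end{equation*}
and the explicit form of $H_I$ in \eqref{2.13}--\eqref{2.14} shows that this commutator is a fermionic quadratic form whose kernel contains the factor $G_\sigma(\cdot,\xi_3)$. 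Combined with the $N_\tau$-estimates underlying \eqref{2.18}, this yields a pointwise bound $\|b_-(\xi_3)\Phi_\sigma\|\leq Cg\,\|G(\cdot,\xi_3)\|_{L^2(\Gamma_1)}/\omega(\xi_3)$; integrating against $\d\xi_3$ and invoking Hypothesis \ref{hypothesis:3.1}(ii) gives $\|N^{(\overline{\nu}_e)}\Phi_\sigma\|\leq Cg$, while \ref{hypothesis:3.1}(iv) refines this to $\|\1_{\{\omega\leq\eta\}}N^{(\overline{\nu}_e)}\Phi_\sigma\|\leq Cg\eta$. The analogous estimates for the $\nu_\mu$ modes follow from (i) and (iii).

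By weak compactness extract a subsequence $\Phi_{\sigma_n}\rightharpoonup\Phi$; norm resolvent convergence of $H_{\sigma_n}$ together with $E_{\sigma_n}\to E$ gives $H\Phi=E\Phi$ in the weak sense, so only $\Phi\neq 0$ remains to check. The uniform number and soft-energy bounds of the previous step, combined with the Fock expansion of $\Phi_\sigma$, yield $|\langle\Omega,\Phi_\sigma\rangle|\geq 1-\mathcal{O}(g)$ uniformly in $\sigma$, which for $g\leq g_1$ sufficiently small transfers to $\langle\Omega,\Phi\rangle\neq 0$; hence $\Phi$ is the sought ground state, and the same lower bound applied to any other ground state forces the ground eigenspace to be one-dimensional. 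The main obstacle lies in the pull-through step: one must control the resolvent $(H_\sigma-E_\sigma+\omega(\xi))^{-1}$ uniformly as $\sigma\to 0$ and $\omega(\xi)\to 0$, and it is exactly here that the infrared decay encoded in Hypothesis \ref{hypothesis:3.1} is needed to absorb the $1/\omega(\xi)$ singularity produced by the resolvent bound and make it integrable against the massless measure.
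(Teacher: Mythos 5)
Your proposal is correct and follows exactly the infrared-cutoff strategy from \cite{G17} and \cite{BFG3} that the paper itself invokes: Theorem~3.1 is stated with the remark that ``it suffices to mimick the proofs given in \cite{ABFG}, \cite{BFG3} and \cite{G17},'' and your sketch (cutoff kernels $F_\sigma,G_\sigma$, gap and ground state for $H_\sigma$, pull-through with Hypothesis~3.1 yielding $\sigma$-uniform soft-photon number estimates, weak limit nonzero via vacuum-overlap bound) is precisely what those proofs do. The only small imprecision is that the number estimate should read $\|\,(N^{(\overline{\nu}_e)})^{1/2}\Phi_\sigma\|\leq Cg$ rather than $\|N^{(\overline{\nu}_e)}\Phi_\sigma\|\leq Cg$, but this does not affect the overlap argument.
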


In order to prove theorem 3.1 it suffices to mimick the proofs given in  \cite{G17},\cite{ABFG}and \cite{BFG3}. We omit the details.

In \cite{AFG} fermionic hamiltonian models are considered without any external field. Without any restriction on the strengh of the interaction a self-adjoint hamiltonian is defined for which the existence of a ground state is proved. Such a result is an open problem in the case of magnetic fermionic models.

\section{The absolutely continuous spectrum.} \mbox{}
\setcounter{equation}{0}

As stated in the introduction , in order to specify the absolutely continuous spectrum of H, we follow the first step of the approach to scattering theory in establishing, for each involved particle, the existence and basic properties of the asymptotic creation and annihilation operators for time $t$ going to $\pm\infty$. The existence of a ground state is quite fundamental in order to get a Fock subrepresentation of the asymptotic canonical anticommutation relations from which we localize the absolutely continuous spectrum of H.

\subsection{Asymptotic fields.} \mbox{}
Let
\begin{equation}\label{4.1}
  \begin{split}
  b_{1,+,t}^{\sharp}(f_1) &= e^{itH}e^{-itH_0} b_{1,+}^{\sharp}(f_1)e^{itH_0}e^{-itH} \\
  b_{2,\pm,t}^{\sharp}(f_2) &= e^{itH}e^{-itH_0} b_{2,\pm}^{\sharp}(f_2)e^{itH_0}e^{-itH} \\
  b_{3,-,t}^{\sharp}(f_3)&= e^{itH}e^{-itH_0} b_{3,-}^{\sharp}(f_3)e^{itH_0}e^{-itH} \\
  b_{4,+,t}^{\sharp}(f_4)&= e^{itH}e^{-itH_0} b_{4,+}^{\sharp}(f_4)e^{itH_0}e^{-itH}.
  \end{split}
\end{equation}
where, for $i=1,2$, $f_i \in L^2(\Gamma_1)$ and, for $j=3,4$, $f_j \in L^2(\R^3)$.

The strong limits of $b_{.,t}^{\sharp}(.)$ when the time t goes to $\pm\infty$ for models in Quantum Field Theory have been considered for fermions and bosons by \cite{KM1}-\cite{KM3} and \cite{HK1}-\cite{HK5} and , more recently, by  \cite{Amm},\cite{DG},\cite{H},\cite{T1},\cite{T2} and \cite{BDH} and references therein.

In the sequel we shall make some of the following additional assumptions on the kernels $F(\xi_2,\xi_4)$ and $G(\xi_1,\xi_3)$.

\begin{hypothesis}\label{hypothesis: 4.1}
\begin{equation*}
  \begin{split}
  \mbox{(i)}\quad &
  \int\left|
 \frac{\partial F}
 {\partial p_{\mu}^3}(\xi_2,\xi_4)\right|^2
 \d\xi_2 \d\xi_4  < \infty\ ,
 \int\left|
 \frac{\partial G}
 {\partial p_{e}^3}(\xi_1,\xi_3)\right|^2
 \d\xi_1 \d\xi_3  < \infty\ .\\
  \mbox{(ii)}\quad &
  \int\left|
\left(\frac{\partial}
 {\partial p_{\mu}^3}\right)^2F(\xi_2,\xi_4)\right|^2
 \d\xi_2 \d\xi_4  < \infty\ ,
 \int\left|
\left(\frac{\partial}
 {\partial p_{e}^3}\right)^2G(\xi_1,\xi_3)\right|^2
 \d\xi_1 \d\xi_3  < \infty\ .
 \end{split}
\end{equation*}
\end{hypothesis}

\begin{hypothesis}\label{hypothesis: 4.2}
\begin{equation*}
  \begin{split}
  \mbox{(i)}\quad &
  \int\left|
 \nabla_{p_\mu}F
 (\xi_2,\xi_4)\right|^2
 \d\xi_2 \d\xi_4  < \infty\ ,
 \int\left|
 \nabla_{p_e}G
 (\xi_1,\xi_3)\right|^2
 \d\xi_1 \d\xi_3  < \infty\ . \\
 \mbox{(ii)}\quad &
  \int\left|\frac{\partial^2 F}
 {\partial p_{\nu_\mu}^1 \partial p_{\nu_\mu}^3}(\xi_2,\xi_4,)\right|^2
 \d\xi_1 \d\xi_2 < \infty\ ,
 \int\left|\frac{\partial^2 G}
 {\partial p_{\overline{\nu}_e}^1 \partial p_{\overline{\nu}_e}^3}(\xi_1,\xi_3,)\right|^2
 \d\xi_1 \d\xi_3 < \infty\ .
\end{split}
\end{equation*}
\end{hypothesis}

We then have

 \begin{theorem}\label{4.3}
Suppose Hypothesis 2.1-Hypothesis 4.2 and $g \leq g_0$. Let $f_1,f_2 \in L^2(\Gamma_1)$ and $f_3,f_4 \in L^2(\R^3)$ . Then the following asymptotic fields
\begin{equation}\label{4.2}
 \begin{split}
  b_{1,+,\pm\infty}^{\sharp}(f_1) &:=\slim_{t\to\pm\infty} b_{1,+,t}^{\sharp}(f_1)  \\
  b_{2, \pm,\pm\infty}^{\sharp}(f_2)&:=\slim_{t\to\pm\infty} b_{2,\pm,t}^{\sharp}(f_2)  \\
  b_{3,-,\pm\infty}^{\sharp}(f_3)&:=\slim_{t\to\pm\infty}b_{3,-,t}^{\sharp}(f_3)  \\
  b_{4,+,\pm\infty}^{\sharp}(f_+)&:=\slim_{t\to\pm\infty} b_{4,+,t}^{\sharp}(f_4) .
 \end{split}
\end{equation}
exist.
\end{theorem}

\begin{proof}
The norms of the $b_{.,.,t}(f_.)'s$ are uniformly bounded with respect to t. Hence, in order to prove theorem 4.3 it suffices to prove the existence of the strong limits on $D(H)=D(H_0)$ with smooth $f_.$.

\noindent \textbf{Strong limits of $b_{1,+,t}^{\sharp}(f_1)$ and $b_{2,\pm,t}^{\sharp}(f_2)$ }.

Let
\begin{multline}\label{4.3}
\mathfrak{D}= \{f \in l^2(\Gamma_1)| f(s,n,.,.) \in C_0^\infty(\R^2 \setminus \{0\}) \mbox{ for all s and n} \ , \mbox{and}   \\
  f(.,n,.,.)=0 \mbox{ for all but finitely many n} \}.
\end{multline}

 Let $f_1,f_2\in \mathfrak{D}$ . According to \cite[lemma1]{HK1} we have
 \begin{equation}\label{4.4}
   b_{1,+}^{\sharp}(f_1)D(H) \subset D(H)\quad \mbox{and}\quad b_{2,\pm}^{\sharp}(f_2)D(H)\subset D(H).
 \end{equation}

 Moreover we have
 \begin{equation}\label{4.5}
 \begin{split}
 e^{itH_0}b_{1,+}^{\sharp}(f_1)e^{-itH_0}\Psi=&b_{1,+}^{\sharp}(e^{itE^e}f_1)\Psi, \\
 e^{itH_0}b_{2,\pm}^{\sharp}(f_2)e^{-itH_0}\Psi=&b_{2,\pm}^{\sharp}(e^{itE^\mu}f_2)\Psi.
 \end{split}
 \end{equation}
where $\Psi \in D(H)$.

Let us first prove the existence of $ b_{1,+,\pm\infty}^{\sharp}(f_1)$.

Let $\Psi \in D(H)$ and $f_{1,t}(\xi_1)=(e^{-itE^e}f_1)(\xi_1)$. By \eqref{4.4},\eqref{4.5} and the strong differentiability of $e^{itH}$ we get

\begin{equation}\label{4.6}
b_{1,+,T}(f_1)\Psi- b_{1,+,T_{0}}(f_1)\Psi=ig \int_{T_{0}}^{T} e^{itH}[ H_I,b_{1,+}(f_{1,t})] e^{-itH}\Psi dt.
 \end{equation}

By using the usual canpnicol anticommutation relations (CAR) (see\eqref{A.4}) we easily get for all $\Psi \in D(H)$
\begin{equation}\label{4.7}
\begin{split}
&\left[H_I^{(1)},b_{1,+}(f_{1,t})\right]\Psi=
 \int \d \xi_1 \d \xi_2 \d \xi_3 \d \xi_4 \,\Big(\int \d x^2 \mathrm{e}^{-ix^2r^2} \\
&\big( \overline{U}^{(\nu_\mu)}(\xi_4) \gamma_{\alpha}(1-\gamma_5)U^{(\mu)}(x^2,\xi_2)\big)
\big(\overline{U}^{(e)}(x^2,\xi_1) \gamma^\alpha(1-\gamma_5)W^{(\overline{\nu}_{e})}(\xi_3)\big)\Big)\\                                                &\overline{f_{1,t}(\xi_1)}F(\xi_{2}, \xi_{4})G(\xi_1,\xi_3)b_{+}^{*}(\xi_4)b_{-}^{*}(\xi_3)b_{+}(\xi_2)\Psi.
\end{split}
\end{equation}

\begin{equation}\label{4.8}
\begin{split}
&\left[H_I^{(2)},b_{1,+}(f_{1,t})\right]\Psi=
- \int \d \xi_1 \d \xi_2 \d \xi_3 \d \xi_4 \Big(\int \d x^2  \mathrm{e}^{-ix^2r^2} \\
&\big( \overline{U}^{(\nu_\mu)}(\xi_4) \gamma^\alpha(1-\gamma_5)W^{(\mu)}(x^2,\xi_2)\big)
\big(\overline{U}^{(e)}(x^2,\xi_1)\gamma_{\alpha}(1-\gamma_5)W^{(\overline{\nu}_{e})}(\xi_3)\big)\\                 &\overline{f_{1,t}(\xi_1)}F(\xi_{2},\xi_{4})G(\xi_1,\xi_3)\Big)
b_{+}^{*}(\xi_4)b^{*}_{-}(\xi_2)b^{*}_{-}(\xi_3)\Psi.
\end{split}
\end{equation}

\begin{equation}\label{4.9}
  \left[(H_I^{(1)})^*,b_{1,+}(f_{1,t})\right]\Psi=\left[(H_I^{(2)})^*,b_{1,+}(f_{1,t})\right]\Psi=0
\end{equation}

where $\overline{U}= U^\dagger\gamma^0$.

Similarly we get

\begin{equation}\label{4.10}
 b^*_{1,+,T}(f_1)\Psi- b^*_{1,+,T_{0}}(f_1)\Psi=ig \int_{T_{0}}^{T} e^{itH} [ H_I,b^*_{1,+}(f_{1,t})] e^{-itH}\Psi dt
\end{equation}

with
\begin{equation}\label{4.11}
  \left[H_I^{(1)},b^*_{1,+}(f_{1,t})\right]\Psi=\left[H_I^{(2)},b^*_{1,+}(f_{1,t})\right]\Psi=0
\end{equation}

and

\begin{equation}\label{4.12}
\begin{split}
&\left[(H_I^{(1)})^*,b^*_{1,+}(f_{1,t})\right]\Psi=
- \int \d \xi_1 \d \xi_2 \d \xi_3 \d\xi_4 \,\Big(\int \d x^2 \mathrm{e}^{ix^2r^2} \\
&\big( \overline{W}^{(\overline{\nu}_e)}(\xi_3) \gamma^\alpha(1-\gamma_5)U^{(e)}(x^2,\xi_1)\big)
\big(\overline{U}^{(\mu)}(x^2,\xi_2) \gamma_\alpha(1-\gamma_5)U^{(\nu_\mu)}(\xi_4)\big)\Big)\\                                                                        &\overline{F(\xi_{2}, \xi_{4})}\, \overline{G(\xi_{1}, \xi_{3})} f_{1,t}(\xi_1)
b_{+}^{*}(\xi_2)b_{-}(\xi_3)b_{+}(\xi_4)\Psi.
\end{split}
\end{equation}

\begin{equation}\label{4.13}
  \begin{split}
  &\left[(H_I^{(2)})^{*},b^*_{1,+}(f_{1,t})\right]\Psi=
   \int \d \xi_1 \d \xi_2 \d \xi_3 \d \xi_4\,\Big(\int \d x^2  \mathrm{e}^{ix^2r^2} \\
  &\big(\overline{W}^{(\overline{\nu}_e)}(\xi_3) \gamma^\alpha(1-\gamma_5)U^{(e)}(x^2,\xi_1)\big)
  \big( W^{(\mu)}(x^2,\xi_2) \gamma_\alpha(1-\gamma_5) U^{(\nu_\mu)}(\xi_4)\big)\Big)\\                                                                         &\overline{F(\xi_{2}, \xi_{4})}\, \overline{G(\xi_1,\xi_3)} f_{1,t}(\xi_1)
  b_{-}(\xi_3)b_{-}(\xi_2)b_{+}(\xi_4)\Psi.
  \end{split}
\end{equation}

By \eqref{4.6} and \eqref{4.10}, in order to prove the existence of $ b_{1,+,\pm\infty}^{\sharp}(f_1)$, we have to estimate $$e^{itH}[ H_I,b_{1,+}(f_{1,t})] e^{-itH}\Psi$$
and $$e^{itH}[ H_I,b^*_{1,+}(f_{1,t})] e^{-itH}\Psi$$
for large $|t|$.

By \eqref{B.5}, the $N_\tau$ estimates (see \cite{GJ} and \cite[Proposition 3.7]{BDG}), \eqref{A.8}, \eqref{A.11} and \eqref{A.13} we get
\begin{equation}\label{4.14}
  \begin{split}
  &\left\|e^{itH}[ H_I^{(1)},b_{1,+}(f_{1,t})] e^{-itH}\Psi\right\|\leq   \\
  &C\bigg( \int \d x^2 \bigg( \int \d \xi_3 \left\| \int \d \xi_1 U^{(e)}( x^2,\xi_1) f_{1,t}(\xi_1)\overline{G(\xi_1,\xi_3)}\right\|_{\C^4}^{2} \bigg) \bigg)^{\frac{1}{2}}\times\\
  &\left\|F(.,.)\right\|_{L^2(\Gamma_1\times\R^3)} \|(N_{\mu_-} + 1)^{\frac{1}{2}} e^{-itH}\Psi\|.
   \end{split}
\end{equation}

and

\begin{equation}\label{4.15}
  \begin{split}
  &\left\|e^{itH}[ H_I^{(2)},b_{1,+}(f_{1,t})] e^{-itH}\Psi\right\|\leq   \\
  &C\bigg( \int \d x^2 \bigg( \int \d \xi_3 \left\| \int \d \xi_1 U^{(e)}( x^2,\xi_1) f_{1,t}(\xi_1)\overline{G(\xi_1,\xi_3)}\right\|_{\C^4}^{2} \bigg) \bigg)^{\frac{1}{2}}\times\\
  &\left\|F(.,.)\right\|_{L^2(\Gamma_1\times\R^3)} \|(N_{\mu_+} + 1)^{\frac{1}{2}} e^{-itH}\Psi\|.
  \end{split}
\end{equation}

By \eqref{2.18}and \eqref{2.19} we have
\begin{equation}\label{4.16}
  \|H_I\Psi\|\leq a\|H_0\phi\| +b\|\Psi\|
  \end{equation}
with $$a=4\frac{C}{M}\|F(.,.)\|_{L^2(\Gamma_1\times\R^3)}\|G(.,.)\|_{L^2(\Gamma_1\times\R^3)}$$ and $$b=2C\|F(.,.)\|_{L^2(\Gamma_1\times\R^3)}\|G(.,.)\|_{L^2(\Gamma_1\times\R^3)}$$.

Hence we obtain
\begin{equation}\label{4.17}
  \|H_0\Psi\|\leq \tilde{a}\|H\Psi\| +\tilde{b}\|\Psi\|
\end{equation}
with
\begin{equation*}
  \tilde{a}=\frac{1}{1-g_0a}\, \mbox{and} \; \tilde{b}=\frac{g_0b}{1-g_0a}
\end{equation*}
Therefore we have
\begin{equation}\label{4.18}
\begin{split}
  \|(N_{e}  + 1)^{\frac{1}{2}} e^{-itH}\Psi\| &\leq \frac{1}{m_e}( \tilde{a}\|H\Psi\| +(\tilde{b}+ m_e)\|\Psi\|) \\
  \|(N_{\mu_{\pm}}  + 1)^{\frac{1}{2}} e^{-itH}\Psi\| &\leq \frac{1}{m_\mu}( \tilde{a}\|H\Psi\| +(\tilde{b}+ m_\mu)\|\Psi\|).
\end{split}
\end{equation}
where $m_{\mu}$ is the mass of the muon.

Hence we get
\begin{equation}\label{4.19}
 \begin{split}
  &\left\|e^{itH}[ H_I,b_{1,+}(f_{1,t})] e^{-itH}\Psi\right\|\leq   \\
  &2C\bigg( \int \d x^2 \bigg( \int \d \xi_3 \left\| \int \d \xi_1 U^{(e)}( x^2,\xi_1) f_{1,t}(\xi_1)\overline{G(\xi_1,\xi_3)}\right\|_{\C^4}^{2} \bigg) \bigg)^{\frac{1}{2}}\times\\
   &\left\|F(.,.)\right\|_{L^2(\Gamma_1\times\R^3)}\frac{1}{m_\mu}( \tilde{a}\|H\Psi\| +(\tilde{b}+ m_\mu)\|\Psi\|.
   \end{split}
\end{equation}

Moreover we have
\begin{equation}\label{4.20}
  \begin{split}
      & \int \d \xi_3 \left\| \int \d \xi_1 U^{(e)}( x^2,\xi_1) f_{1,t}(\xi_1)\overline{G(\xi_1,\xi_3)}\right\|_{\C^4}^{2}  \\
      &=\sum_{j=1}^4  \int \d \xi_3 \left| \int \d \xi_1 U_{j}^{(e)}( x^2,\xi_1)e^{-itE_n^(e)(p_e^3)} f_{1}(\xi_1)\overline{G(\xi_1,\xi_3)}\right|^{2}
  \end{split}
\end{equation}
where $\big(\bigcup_{j=1}^4 U_{j}^{(e)}( x^2,\xi_1)\big)$ are the four components of the vectors \eqref{A.8} and \eqref{A.11} $\in \C^4$.

Note that
\begin{equation}\label{4.21}
  e^{-itE_n^{(e)}(p_e^3)}=\frac{1}{it}\frac{E_n^{(e)}(p_e^3)}{p_e^3}\frac{d}{dp_e^3}e^{-itE_n^{(e)}(p_e^3)} .
\end{equation}

By \eqref{4.20} and\eqref{4.21}, by a two-fold partial integration with respect to $p_e^3$ and by Hypothesis 4.1 one can show that there exit for every $j$ a function, denoted by $H_j^{(e)}(\xi_1,\xi_3)$, such that
\begin{equation}\label{4.22}
  \begin{split}
  &\sum_{j=1}^4 \int \d x^2 \bigg( \int \d \xi_3 \left| \int \d \xi_1 U_{j}^{(e)}( x^2,\xi_1)e^{-itE_n^{(e)}(p_e^3)} f_{1}(\xi_1)\overline{G(\xi_1,\xi_3)}\right|^{2}\bigg)  \\
  &= \sum_{j=1}^4\frac{1}{t^4} \int \d x^2 \bigg( \int \d \xi_3 \left| \int \d \xi_1 U_{j}^{(e)}( x^2,\xi_1)H_j^{(e)}(\xi_1,\xi_3)e^{-itE_n^{(e)}(p_e^3}\right|^{2}\bigg)\\
  &\leq C_{f_1} \frac{1}{t^4}\sum_{j=1}^4\big(\int \d \xi_1 \d \xi_3 \chi_{f_1}(\xi_1)|H_j^{(e)}(\xi_1,\xi_3)|^2 \big)<\infty.
  \end{split}
\end{equation}
Here $\chi_{f_1}(.)$ is the characteristic function of the support of $f_1(.)$ and \eqref{A.13} is used.

By \eqref{4.6} and \eqref{4.19}-
 \eqref{4.22} the strong limits of $ b_{1,+,t}(f_1)$ on $\mathfrak{F}$ when t goes to $\pm\infty$ and for all $f_1 \in L^2(\Gamma_1)$ exist for every $g \leq g_0$.

By \eqref{4.11}-\eqref{4.13} and by mimicking the proof of \eqref{4.14} and \eqref{4.15} we get

\begin{equation}\label{4.23}
  \begin{split}
  &\mbox{Sup}\bigg(\left\|e^{itH}[ (H_I^{(1)})^*,b_{1,+}^*(f_{1,t})] e^{-itH}\Psi\right\|, \left\|e^{itH}[ (H_I^{(2)})^*,b_{1,+}^*(f_{1,t})] e^{-itH}\Psi\right\|\bigg)   \\
  &\leq C\bigg( \int \d x^2 \bigg( \int \d \xi_3 \left\| \int \d \xi_1 U^{(e)}( x^2,\xi_1) f_{1,t}(\xi_1)\overline{G(\xi_1,\xi_3)}\right\|_{\C^4}^{2} \bigg) \bigg)^{\frac{1}{2}}\\
  &\times \left\|F(.,.)\right\|_{L^2(\Gamma_1\times\R^3)}\frac{1}{m_\mu}( \tilde{a}\|H\Psi\| +(\tilde{b}+ m_\mu)\|\Psi\|).
   \end{split}
\end{equation}

It follows from \eqref{4.10} and \eqref{4.20}-\eqref{4.23} that the strong limits of $ b_{1,+,t}^*(f_1)$  exist when t goes to $\pm\infty$, for all $f_1 \in L^2(\Gamma_1)$ and for every $g \leq g_0$.

We now consider the existence of $ b_{2,\epsilon,\pm\infty}^{\sharp}(f_2)$

Let $\Psi \in D(H)$ and $f_{2,t}(\xi_2)=(e^{-itE^\mu}f_2)(\xi_2)$ with $f_2\in \mathfrak{D}$. By \eqref{4.4}, \eqref{4.5} and the strong differentiability of $e^{itH}$ we get

\begin{equation}\label{4.24}                                                                                                                                    b_{2,+,T}(f_2)\Psi- b_{2,+,T_{0}}(f_2)\Psi=ig \int_{T_{0}}^{T} e^{itH}[ H_I,b_{2,+}(f_{2,t})] e^{-itH}\Psi dt
\end{equation}

with

\begin{equation}\label{4.25}
  \left[H_I^{(1)},b_{2,+}(f_{2,t})\right]\Psi=\left[(H_I^{(2)}),b_{2,+}(f_{2,t})\right]\Psi=\left[(H_I^{(2)})^*,b_{2,+}(f_{2,t})\right]\Psi=0
\end{equation}

and

\begin{equation}\label{4.26}
\begin{split}
&\left[(H_I^{(1)})^*,b_{2,+}(f_{2,t})\right]\Psi=
- \int \d \xi_1 \d \xi_2 \d \xi_3 \d\xi_4 \,\Big(\int \d x^2 \mathrm{e}^{ix^2r^2} \\
&\big( \overline{W}^{(\overline{\nu}_e)}(\xi_3) \gamma^\alpha(1-\gamma_5)U^{(e)}(x^2,\xi_1)\big)
\big(\overline{U}^{(\mu)}(x^2,\xi_2) \gamma_\alpha(1-\gamma_5)U^{(\nu_\mu)}(\xi_4)\big)\Big)\\                                                                   &\overline{F(\xi_{2}, \xi_{4})}\,\overline{G(\xi_{1}, \xi_{3})}\, \overline{f_{2,t}(\xi_2)}
b_{-}(\xi_3)b_{+}(\xi_1)b_{+}(\xi_4)\Psi.
\end{split}
\end{equation}

Similarly we obtain

\begin{equation}\label{4.27}
  \begin{split}
  &\left\|e^{itH}[ H_I,b_{2,+}(f_{2,t})] e^{-itH}\Psi\right\|\leq   \\
  &C\bigg( \int \d x^2 \bigg( \int \d \xi_4 \left\| \int \d \xi_2 U^{(\mu)}( x^2,\xi_2)F(\xi_2,\xi_4) f_{2,t}(\xi_1)\right\|_{\C^4}^{2} \bigg) \bigg)^{\frac{1}{2}}\times\\
  &\left\|G(.,.)\right\|_{L^2(\Gamma_1\times\R^3)} \|(N_{e} + 1)^{\frac{1}{2}} e^{-itH}\Psi\|.
  \end{split}
\end{equation}

It follows from \eqref{4.16}-\eqref{4.18} that
\begin{equation}\label{4.28}
 \|(N_{e}  + 1)^{\frac{1}{2}} e^{-itH}\Psi\|\leq \frac{1}{m_e}( \tilde{a}\|H\Psi\| +(\tilde{b}+ m_e)\|\Psi\|.
\end{equation}

Hence

\begin{equation}\label{4.29}
  \begin{split}
  &\left\|e^{itH}[ H_I,b_{2,+}(f_{2,t})] e^{-itH}\Psi\right\|\leq   \\
  &C\bigg( \int \d x^2 \bigg( \int \d \xi_4 \left\| \int \d \xi_2 U^{(\mu)}( x^2,\xi_2)F(\xi_2,\xi_4) f_{2,t}(\xi_2)\right\|_{\C^4}^{2} \bigg) \bigg)^{\frac{1}{2}}\times\\
  &\left\|G(.,.)\right\|_{L^2(\Gamma_1\times\R^3)}\frac{1}{m_e}( \tilde{a}\|H\Psi\| +(\tilde{b}+ m_e)\|\Psi\| .
  \end{split}
\end{equation}

Moreover we have
\begin{equation}\label{4.30}
  \begin{split}
      & \int \d \xi_4 \left\| \int \d \xi_2 U^{(\mu)}( x^2,\xi_2) f_{2,t}(\xi_2)F(\xi_2,\xi_4)\right\|_{\C^4}^{2}  \\
      &=\sum_{j=1}^4  \int \d \xi_4 \left| \int \d \xi_2 U_{j}^{(\mu)}( x^2,\xi_2)e^{-itE_n^{(\mu)}(p_{\mu}^3)} f_{2,t}(\xi_2)F(\xi_2,\xi_4)\right|^{2}.
  \end{split}
\end{equation}
where $\big(\bigcup_{j=1}^4 U_{j}^{(\mu)}( x^2,\xi_2)\big)$ are the four components of the vectors \eqref{A.8} and \eqref{A.11} $\in \C^4$ for $\alpha=\mu$.

By\eqref{4.30}, by a two-fold partial integration with respect to $p_{\mu}^3$ and by Hypothesis 4.1 one can show that there exits for every $j$ a function, denoted by $H_j^{(\mu)}(\xi_2,\xi_4)$ , such that
\begin{equation}\label{4.31}
  \begin{split}
  &\sum_{j=1}^4 \int \d x^2 \bigg( \int \d \xi_4 \left| \int \d \xi_2 U_{j}^{(\mu)}( x^2,\xi_2)e^{-itE_{n}^{(\mu)}(p_{\mu}^3)} f_{2}(\xi_2)F(\xi_2,\xi_4)\right|^{2}\bigg)  \\
  &= \sum_{j=1}^4\frac{1}{t^4} \int \d x^2 \bigg( \int \d \xi_4 \left| \int \d \xi_1 U_{j}^{(\mu)}( x^2,\xi_2)H_j^{(\mu)}(\xi_2,\xi_4)e^{-itE_n^{(\mu)}(p_{\mu}^3)} \right|^{2}\bigg)\\
  &\leq C_{f_2} \frac{1}{t^4}\sum_{j=1}^4\big(\int \d \xi_2 \d \xi_4 \chi_{f_2}(\xi_2)|H_j^{(\mu)}(\xi_2,\xi_4)|^2 \big)<\infty.
  \end{split}
\end{equation}
Here $\chi_{f_2}(.)$ is the characteristic function of the support of $f_2(.)$ and \eqref{A.13} is used.

Similarly we have

\begin{equation}\label{4.32}
 b_{2,+,T}^*(f_2)\Psi- b_{2,+,T_{0}}^*(f_2)\Psi=ig \int_{T_{0}}^{T} e^{itH}[ H_I,b_{2,+}^*(f_{2,t})] e^{-itH}\Psi dt
\end{equation}

with

\begin{equation}\label{4.33}
  \left[(H_I^{(1)})^*,b_{2,+}^*(f_{2,t})\right]\Psi=\left[H_I^{(2)},b_{2,+}^*(f_{2,t})\right]\Psi=\left[(H_I^{(2)})^*,b_{2,+}^*(f_{2,t})\right]\Psi=0
\end{equation}

and

\begin{equation}\label{4.34}
\begin{split}
&\left[(H_I^{(1)}),b_{2,+}^*(f_{2,t})\right]\Psi=
- \int \d \xi_1 \d \xi_2 \d \xi_3 \d\xi_4 \,\Big(\int \d x^2 \mathrm{e}^{-ix^2r^2} \\
&\big(\overline{U}^{(e)}(x^2,\xi_1)\gamma^\alpha(1-\gamma_5) W^{(\overline{\nu}_e)}(\xi_3)\big)G(\xi_{1}, \xi_{3})
\big(\overline{U}^{(\nu_\mu)}(\xi_4)\gamma_\alpha(1-\gamma_5)U^{(\mu)}(x^2,\xi_2)\big)\Big)\\                                                                                   &F(\xi_{2}, \xi_{4})G(\xi_{1}, \xi_{3})f_{2,t}(\xi_2)
b_{+}^*(\xi_4)b_{+}^*(\xi_1)b_{-}^*(\xi_3)\Psi.
\end{split}
\end{equation}

Similarly we obtain

\begin{equation}\label{4.35}
  \begin{split}
  &\left\|e^{itH}[ H_I,b_{2,+}^*(f_{2,t})] e^{-itH}\Psi\right\|\leq   \\
  &C\bigg( \int \d x^2 \bigg( \int \d \xi_4 \left\| \int \d \xi_2 U^{(\mu)}( x^2,\xi_2)F(\xi_2,\xi_4) f_{2,t}(\xi_1)\right\|_{\C^4}^{2} \bigg) \bigg)^{\frac{1}{2}}\times\\
  &\left\|G(.,.)\right\|_{L^2(\Gamma_1\times\R^3)}\frac{1}{m_e}( \tilde{a}\|H\Psi\| +(\tilde{b}+ m_e)\|\Psi\| .
  \end{split}
\end{equation}

It follows from \eqref{4.29},\eqref{4.31} and \eqref{4.35} that the strong limits of $b^{\sharp}(2,+,t)(f_2)$ exist when t goes to $\pm\infty$, for all $f_2\in L^2(\Gamma_1\times\R^3)$ and for every $g \leq g_0$.

Let us now consider the strong limits of $b^{\sharp}(2,-,t)(f_2)$.

We have for all $f_2\in \mathfrak{D}$
\begin{equation}\label{4.36}
b_{2,-,T}(f_)\Psi- b_{2,-,T_{0}}(f_2)\Psi=ig \int_{T_{0}}^{T} e^{itH}[ H_I,b_{2,-}(f_{2,t})] e^{-itH}\Psi dt
\end{equation}

with

\begin{equation}\label{4.37}
  \left[H_I^{(1)},b_{2,-}(f_{2,t})\right]\Psi=\left[(H_I^{(1)})^*,b_{2,-}(f_{2,t})\right]\Psi=\left[(H_I^{(2)})^*,b_{2,-}(f_{2,t})\right]\Psi=0
\end{equation}

\begin{equation}\label{4.38}
\begin{split}
&\left[(H_I^{(2)}),b_{2,-}(f_{2,t})\right]\Psi=
- \int \d \xi_1 \d \xi_2 \d \xi_3 \d\xi_4 \,\Big(\int \d x^2 \mathrm{e}^{-ix^2r^2} \\
&\big(\overline{U}^{(e)}(x^2,\xi_1)\gamma^\alpha(1-\gamma_5) W^{(\overline{\nu}_e)}(\xi_3)\big)G(\xi_{1}, \xi_{3})
\big(\overline{U}^{(\nu_\mu)}(\xi_4)\gamma_\alpha(1-\gamma_5)W^{(\mu)}(x^2,\xi_2)\big)\Big)\\                                                                                   &F(\xi_{2}, \xi_{4})G(\xi_{1}, \xi_{3})f_{2,t}(\xi_2)
b_{+}^*(\xi_4)b_{+}^*(\xi_1)b_{-}^*(\xi_3)\Psi.
\end{split}
\end{equation}

By mimicking the proofs given above we get

\begin{equation}\label{4.39}
  \begin{split}
  &\left\|e^{itH}[ H_I,b_{2,-}(f_{2,t})] e^{-itH}\Psi\right\|\leq   \\
  &C\bigg( \int \d x^2 \bigg( \int \d \xi_4 \left\| \int \d \xi_2 W^{(\mu)}( x^2,\xi_2)F(\xi_2,\xi_4)\overline{f_{2,t}(\xi_1)}\right\|_{\C^4}^{2} \bigg) \bigg)^{\frac{1}{2}}\times\\
  &\left\|G(.,.)\right\|_{L^2(\Gamma_1\times\R^3)}\frac{1}{m_e}( \tilde{a}\|H\Psi\| +(\tilde{b}+ m_e)\|\Psi\| .
  \end{split}
\end{equation}

and

\begin{equation}\label{4.40}
  \begin{split}
      & \int \d \xi_4 \left\| \int \d \xi_2 W^{(\mu)}( x^2,\xi_2) \overline{f_{2,t}(\xi_2)}F(\xi_2,\xi_4)\right\|_{\C^4}^{2}  \\
      &=\sum_{j=1}^4 \int \d \xi_4 \left| \int \d \xi_2 W_{j}^{(\mu)}( x^2,\xi_2)e^{-itE_n^{(\mu)}(p_{\mu}^3)} \overline{f_{2}(\xi_2)}F(\xi_2,\xi_4) \right|^{2}.
  \end{split}
\end{equation}
where $\big(\bigcup_{j=1}^4 W_{j}^{(\mu)}( x^2,\xi_2)\big)$ are the four components of the vectors \eqref{A.14} - \eqref{A.16} $\in \C^4$ for $\alpha=\mu$.

By\eqref{4.40}, by a two-fold partial integration with respect to $p_{\mu}^3$ and by Hypothesis 4.1 one can show that there exit for every $j$ a function, denoted by $\widetilde{H}_j^{(\mu)}(\xi_2,\xi_4)$ , such that
\begin{equation}\label{4.41}
  \begin{split}
  &\sum_{j=1}^4 \int \d x^2 \bigg( \int \d \xi_4 \left| \int \d \xi_2 W_{j}^{(\mu)}( x^2,\xi_2)e^{-itE_{n}^{(\mu)}(p_{\mu}^3)} \overline{f_{2}(\xi_2)}F(\xi_2,\xi_4)\right|^{2}\bigg)  \\
  &= \sum_{j=1}^4\frac{1}{t^4} \int \d x^2 \bigg( \int \d \xi_4 \left| \int \d \xi_2 W_{j}^{(\mu)}( x^2,\xi_2)\widetilde{H}_j^{(\mu)}(\xi_2,\xi_4)e^{-itE_n^{(\mu)}(p_{\mu}^3)} \right|^{2}\bigg)\\
  &\leq C_{f_2} \frac{1}{t^4}\sum_{j=1}^4\big(\int \d \xi_2 \d \xi_4 \chi_{f_2}(\xi_2)|\widetilde{H}_j^{(\mu)}(\xi_2,\xi_4)|^2 \big)<\infty.
  \end{split}
\end{equation}
Here $\chi_{f_2}(.)$ is the characteristic function of the support of $f_2(.)$ and \eqref{A.17} is used.

It follows from \eqref{4.36},\eqref{4.39}-\eqref{4.41} that the strong limits of $b_{2,-,t}(f_2)$ exist when t goes to $\pm\infty$, for all $f_2\in L^2(\Gamma_1\times\R^3)$ and for every $g \leq g_0$.

We now have for all $f_2\in \mathfrak{D}$
\begin{equation}\label{4.42}
 b_{2,-,T}^{*}(f_2)\Psi- b_{2,-,T_{0}}^*(f_2)\Psi=ig \int_{T_{0}}^{T} e^{itH}[ H_I,b_{2,-}^*(f_{2,t})] e^{-itH}\Psi dt
\end{equation}
 with

\begin{equation}\label{4.43}
  \left[H_I^{(1)},b_{2,-}^*(f_{2,t})\right]\Psi=\left[(H_I^{(1)})^*,b_{2,-}^*(f_{2,t})\right]\Psi=\left[H_I^{(2)},b_{2,-}^*(f_{2,t})\right]\Psi=0
\end{equation}

\begin{equation}\label{4.44}
\begin{split}
&\left[(H_I^{(2)})^*,b_{2,-}^*(f_{2,t})\right]\Psi=
- \int \d \xi_1 \d \xi_2 \d \xi_3 \d\xi_4 \,\Big(\int \d x^2 \mathrm{e}^{ix^2r^2} \\
&\big( \overline{W}^{(\overline{\nu}_e)}(\xi_3)\gamma^\alpha(1-\gamma_5)U^{(e)}(x^2,\xi_1)\big)
\big(\overline{W}^{(\mu)}(x^2,\xi_2)\gamma_\alpha(1-\gamma_5)U^{(\nu_\mu)}(\xi_4)\big)\Big)\\                                                                         &\overline{F(\xi_{2}, \xi_{4})}\,\overline{G(\xi_{1}, \xi_{3})}f_{2,t}(\xi_2)
b_{-}(\xi_3)b_{+}(\xi_1)b_{+}(\xi_4)\Psi.
\end{split}
\end{equation}

Similarly to \eqref{4.39} we get

\begin{equation}\label{4.45}
  \begin{split}
  &\left\|e^{itH}[ H_I,b_{2,-}^*(f_{2,t})] e^{-itH}\Psi\right\|\leq   \\
  &C\bigg( \int \d x^2 \bigg( \int \d \xi_4 \left\| \int \d \xi_2 W^{(\mu)}( x^2,\xi_2)F(\xi_2,\xi_4)\overline{f_{2,t}(\xi_1)}\right\|_{\C^4}^{2} \bigg) \bigg)^{\frac{1}{2}}\times\\
  &\left\|G(.,.)\right\|_{L^2(\Gamma_1\times\R^3)}\frac{1}{m_e}( \tilde{a}\|H\Psi\| +(\tilde{b}+ m_e)\|\Psi\| .
  \end{split}
\end{equation}

It follows from \eqref{4.43},\eqref{4.45},\eqref{4.40} and \eqref{4.41} that the strong limits of $b(2,-,t)^*(f_2)$ exist when t goes to $\pm\infty$, for all $f_2\in L^2(\Gamma_1\times\R^3)$ and for every $g \leq g_0$.

\noindent \textbf{Strong limits of $b_{3,-,t}^{\sharp}(f_3)$ and $b_{4,+,t}^{\sharp}(f_4)$ }.

Let
\begin{equation}\label{4.46}
\mathfrak{D'}= \{f(.)\in C_0^\infty(\R^3 \setminus \{(0,0,p^3)\}); p^3\in\R \}.
\end{equation}

 Let $f_1,f_2\in \mathfrak{D'}$ . According to \cite[lemma1]{HK1} we have
 \begin{equation}\label{4.47}
   b_{4,+}^{\sharp}(f_4)D(H) \subset D(H)\quad \mbox{and}\quad b_{3,-}^{\sharp}(f_3)D(H)\subset D(H).
 \end{equation}

 Moreover we have
 \begin{equation}\label{4.48}
 \begin{split}
 e^{itH_0}b_{3,-}^{\sharp}(f_3)e^{-itH_0}\Psi=&b_{3,-}^{\sharp}(e^{it|\textbf{p}_3|}f_3)\Psi, \\
 e^{itH_0}b_{4,+}^{\sharp}(f_4)e^{-itH_0}\Psi=&b_{4,+}^{\sharp}(e^{it|\textbf{p}_4|}f_4)\Psi.
 \end{split}
 \end{equation}
where $\Psi \in D(H)$.

Let $\Psi \in D(H)$ and $f_{j,t}(\xi_j)=(e^{-it|\textbf{p}_j|}f_j)(\xi_j)$ where $j=3,4$.. By \eqref{4.4},\eqref{4.5} and the strong differentiability of $e^{itH}$ we get

\begin{equation}\label{4.49}
 b_{3,-,T}(f_3)\Psi- b_{3,-,T_{0}}(f_1)\Psi=ig \int_{T_{0}}^{T} e^{itH}[ H_I,b_{3,-}(f_{3,t})] e^{-itH}\Psi dt
\end{equation}

By using the usual anticommutation relations (CAR)(see \eqref{A.4} and \eqref{B.4}) we easily get for all $\Psi \in D(H)$
\begin{equation}\label{4.50}
\begin{split}
&\left[H_I^{(1)},b_{3,-}(f_{3,t})\right]\Psi=
-\int \d \xi_1 \d \xi_2 \d \xi_3 \d \xi_4 \,\Big(\int \d x^2 \mathrm{e}^{-ix^2r^2} \\
&\big( \overline{U}^{(\nu_\mu)}(\xi_4) \gamma_{\alpha}(1-\gamma_5)U^{(\mu)}(x^2,\xi_2)\big)
\big(\overline{U}^{(e)}(x^2,\xi_1) \gamma^\alpha(1-\gamma_5)W^{(\overline{\nu}_{e})}(\xi_3)\big)\Big)\\                                         &\overline{f_{3,t}(\xi_3)}F(\xi_{2}, \xi_{4})G(\xi_1,\xi_3)
b_{+}^{*}(\xi_4)b_{+}^{*}(\xi_1)b_{+}(\xi_2)\Psi.
\end{split}
\end{equation}

\begin{equation}\label{4.51}
\begin{split}
&\left[H_I^{(2)},b_{3,-}(f_{3,t})\right]\Psi=
 \int \d \xi_1 \d \xi_2 \d \xi_3 \d \xi_4 \Bigg(\int \d x^2  \mathrm{e}^{-ix^2r^2} \\
&\big( \overline{U}^{(\nu_\mu)}(\xi_4) \gamma^\alpha(1-\gamma_5)W^{(\mu)}(x^2,\xi_2)\big)
\big(\overline{U}^{(e)}(x^2,\xi_1)\gamma_{\alpha}(1-\gamma_5)W^{(\overline{\nu}_{e})}(\xi_3)\big)\Big)\\                  &\overline{f_{3,t}(\xi_3)}F(\xi_{2},\xi_{4})G(\xi_1,\xi_3)
b_{+}^{*}(\xi_4)b^{*}_{-}(\xi_2)b^*_{+}(\xi_1)\Psi.
\end{split}
\end{equation}

and

\begin{equation}\label{4.52}
  \left[(H_I^{(1)})^*,b_{3,-}(f_{3,t})\right]\Psi=\left[(H_I^{(2)})^*,b_{3,-}(f_{3,t})\right]\Psi=0
\end{equation}

By \eqref{B.5} we get
\begin{equation}\label{4.53}
\begin{split}
&\left\|\left[H_I^{(1)},b_{3,-}(f_{3,t})\right]\Psi\right\|\leq \\
&\int \d x^2 \bigg(\int \d \xi_1\left|\big(\overline{U}^{(e)}(x^2,\xi_1) \gamma^\alpha(1-\gamma_5) \int \d\xi_3 \mathrm{e}^{-ip^2_{3}x^2} W^{(\overline{\nu}_{e})}(\xi_3)G(\xi_1,\xi_3)\overline{f_{3,t}(\xi_3)}\big)\right|^2\bigg)^\frac{1}{2}\\                                                                                          &\left \|\int \d \xi_2 \d \xi_4 \mathrm{e}^{-ip^2_{4}x^2}\big( \overline{U}^{(\nu_\mu)}(\xi_4) \gamma_{\alpha}(1-\gamma_5)U^{(\mu)}(x^2,\xi_2)\big)b_{+}^{*}(\xi_4)b_{+}(\xi_2)\Psi\right\|.
\end{split}
\end{equation}

and

\begin{equation}\label{4.54}
\begin{split}
&\left\|\left[H_I^{(2)},b_{3,-}(f_{3,t})\right]\Psi\right\|\leq \\
&\int \d x^2 \bigg(\int \d \xi_1\left|\big(\overline{U}^{(e)}(x^2,\xi_1) \gamma^\alpha(1-\gamma_5) \int \d\xi_3 \mathrm{e}^{-ip^2_{3}x^2} W^{(\overline{\nu}_{e})}(\xi_3)G(\xi_1,\xi_3)\overline{f_{3,t}(\xi_3)}\big)\right|^2\bigg)^\frac{1}{2}\\                                                                                          &\left \|\int \d \xi_2 \d \xi_4 \mathrm{e}^{-ip^2_{4}x^2}\big( \overline{W}^{(\nu_\mu)}(\xi_4) \gamma_{\alpha}(1-\gamma_5)U^{(\mu)}(x^2,\xi_2)\big)b_{+}^{*}(\xi_4)b_{-}^*(\xi_2)\Psi\right\|.
\end{split}
\end{equation}

Moreover we have
\begin{equation}\label{4.55}
  \begin{split}
      & \int \d \xi_1 \left\| \int \d \xi_3 \mathrm{e}^{-ip^2_{3}x^2} W^{(\overline{\nu_e})}(\xi_3) \overline{f_{1,t}(\xi_1)}G(\xi_1,\xi_3)\right\|_{\C^4}^{2}  \\
      &=\sum_{j=1}^4  \int \d \xi_1 \left| \int \d \xi_3 \mathrm{e}^{-ip^2_{3}x^2}W_j^{(\overline{\nu_e})}(\xi_3)(e^{it|\textbf{p}_3|}\overline{f_{3}(\xi_3)}G(\xi_1,\xi_3)\right|^{2}.
  \end{split}
\end{equation}
where $\big(\bigcup_{j=1}^4 W_{j}^{(\overline{\nu_e})}(\xi_3)\big)$ are the four components of the vector \eqref{B.12} $\in \C^4$.

By a two-fold partial integration with respect to $p^3$ and $p^1$ and by Hypothesis 4.2 one can show that there exit for every $j$ a function , denoted by $H_j^{(\overline{\nu_e})}(\xi_1,\xi_3)$, such that
\begin{equation}\label{4.56}
  \begin{split}
  &\sum_{j=1}^4 \int \d \xi_1 \left| \int \d \xi_3\mathrm{e}^{-ip^2_{3}x^2} W_{j}^{(\overline{\nu_e})}(\xi_3)e^{it|\textbf{p}_3|} \overline{f_{3}(\xi_3)}G(\xi_1,\xi_3)\right|^{2}\\
  &= \sum_{j=1}^4\frac{1}{t^4}  \int \d \xi_1 \left| \int \d \xi_3 \mathrm{e}^{-ip^2_{3}x^2} W_{j}^{(\overline{\nu_e})}(\xi_3)H_j^{(\overline{\nu_e})}(\xi_1,\xi_3)e^{it|\textbf{p}_3|}\right|^{2}\\
  &\leq C_{f_3}^2 \frac{1}{t^4}\sum_{j=1}^4\int \d \xi_1 \d \xi_3 \chi_{f_3}(\xi_3)e^{it|\textbf{p}_3|}|H_j^{(\overline{\nu_e})}(\xi_1,\xi_3)|^2<\infty.
  \end{split}
\end{equation}
Here $\chi_{f_3}(.)$ is the characteristic function of the support of $f_3(.)$.

By the $N_\tau$ estimates and by \eqref{4.18},\eqref{A.13}, \eqref{A.17} and \eqref{B.14} it follows from \eqref{4.52}-\eqref{4.56} that, for every $\Psi \in D(H)$,
\begin{equation}\label{4.57}
  \begin{split}
  &\left\|e^{itH}\left [H_I,b_{3,-}(f_{3,t})\right]e^{-itH}\Psi\right\|\leq  \\
  &CC_{f_3}\frac{1}{t^2}\bigg(\sum_{j=1}^4\int \d \xi_1 \d \xi_3 \chi_{f_3}(\xi_3)|H_j^{(\overline{\nu_e})}(\xi_1,\xi_3)|^2\bigg)^{\frac{1}{2}}\times \\
  &\left\|F(.,.)\right\|_{L^2(\Gamma_1\times\R^3)}\frac{1}{m_\mu}( \tilde{a}\|H\Psi\| +(\tilde{b}+ m_\mu)\|\Psi\|)
  \end{split}
\end{equation}

Furthermore we have

\begin{equation}\label{4.58}
b^*_{3,-,T}(f_3)\Psi- b^*_{3,-,T_{0}}(f_3)\Psi=ig \int_{T_{0}}^{T} e^{itH} [ H_I,b^*_{3,-}(f_{3,t})] e^{-itH}\Psi dt
\end{equation}

with

\begin{equation}\label{4.59}
\begin{split}
&\left[(H_I^{(1)})^*,b_{3,-}^*(f_{3,t})\right]\Psi=
- \int \d \xi_1 \d \xi_2 \d \xi_3 \d\xi_4 \,\Big(\int \d x^2 \mathrm{e}^{ix^2r^2} \\
&\big( \overline{W}^{(\overline{\nu}_e)}(\xi_3)\gamma^\alpha(1-\gamma_5)U^{(e)}(x^2,\xi_1)\big)
\big(\overline{U}^{(\mu)}(x^2,\xi_2)\gamma_\alpha(1-\gamma_5)U^{(\nu_\mu)}(\xi_4)\big)\Big)\\                                                                         &\overline{F(\xi_{2}, \xi_{4})}\,\overline{G(\xi_{1}, \xi_{3})}f_{3,t}(\xi_2)
b_{+}^*(\xi_2)b_{+}(\xi_1)b_{+}(\xi_4)\Psi.
\end{split}
\end{equation}

\begin{equation}\label{4.60}
  \begin{split}
  &\left[(H_I^{(2)})^{*},b^*_{3,-}(f_{3,t})\right]\Psi=
   \int \d \xi_1 \d \xi_2 \d \xi_3 \d \xi_4\,\Big(\int \d x^2  \mathrm{e}^{ix^2r^2} \\
  &\big(\overline{W}^{(\overline{\nu}_e)}(\xi_3) \gamma^\alpha(1-\gamma_5)U^{(e)}(x^2,\xi_1)\big)
  \big( W^{(\mu)}(x^2,\xi_2) \gamma_\alpha(1-\gamma_5) U^{(\nu_\mu)}(\xi_4)\big)\Big)\\                                                                              &\overline{F((\xi_{2}, \xi_{4})}\,\overline{G(\xi_1,\xi_3)}f_{3,t}
  b_{+}(\xi_1)b_{-}(\xi_2)b_{+}(\xi_4)\Psi.
  \end{split}
\end{equation}

and

\begin{equation}\label{4.61}
  \left[(H_I^{(1)}),b_{3,-}^*(f_{3,t})\right]\Psi=\left[(H_I^{(2)}),b_{3,-}^*(f_{3,t})\right]\Psi=0
\end{equation}

By adapting the proof of\eqref{4.53}-\eqref{4.57} to \eqref{4.58}-\eqref{4.61} we obtain
\begin{equation}\label{4.62}
  \begin{split}
  &\left\|e^{itH}[ H_I,b_{3,-}^*(f_{3,t})] e^{-itH}\Psi\right\|\\
  &\leq CC_{f_3}\frac{1}{t^2}\bigg(\sum_{j=1}^4\int \d \xi_1 \d \xi_3 \chi_{f_3}(\xi_3)|H_j^{(\overline{\nu_e})}(\xi_1,\xi_3)|^2\bigg)^{\frac{1}{2}}\times \\
  &\times \left\|F(.,.)\right\|_{L^2(\Gamma_1\times\R^3)}\frac{1}{m_\mu}( \tilde{a}\|H\Psi\| +(\tilde{b}+ m_\mu)\|\Psi\|).
   \end{split}
\end{equation}
Here $\chi_{f_3}(.)$ is the characteristic function of the support of $f_3(.)$.

It follows from \eqref{4.49},\eqref{4.47},\eqref{4.58} and \eqref{4.62} that the strong limits of $b^\sharp_{3,-,t}(f 3 )$ exist when t goes to $\pm\infty$, for all $f_3\in L^2(\R^3)$ and for every $g \leq g_0$.

We now have

\begin{equation}\label{4.63}
b_{4,+,T}(f_1)\Psi- b_{4,+,T_{0}}(f_1)\Psi=ig \int_{T_{0}}^{T} e^{itH}[ H_I,b_{4,+}(f_{4,t})] e^{-itH}\Psi dt
\end{equation}

By using the usual canonical anticommutation relations (CAR)(see \eqref{A.4} and \eqref{B.4}) we easily get for all $\Psi \in D(H)$

\begin{equation}\label{4.64}
\begin{split}
&\left[H_I^{(1)},b_{4,+}(f_{4,t})\right]\Psi=
-\int \d \xi_1 \d \xi_2 \d \xi_3 \d \xi_4 \Big(\int \d x^2 \mathrm{e}^{-ix^2r^2} \\
&\big( \overline{U}^{(\nu_\mu)}(\xi_4) \gamma_{\alpha}(1-\gamma_5)U^{(\mu)}(x^2,\xi_2)\big)
\big(\overline{U}^{(e)}(x^2,\xi_1) \gamma^\alpha(1-\gamma_5)W^{(\overline{\nu}_{e})}(\xi_3)\big)\Big)\\                                                &\overline{f_{4,t}(\xi_4)}F(\xi_{2}, \xi_{4})G(\xi_1,\xi_3)
b_{+}^{*}(\xi_1)b_{-}^{*}(\xi_3)b_{+}(\xi_2)\Psi.
\end{split}
\end{equation}

\begin{equation}\label{4.65}
\begin{split}
&\left[H_I^{(2)},b_{4,+}(f_{4,t})\right]\Psi=
-\int \d \xi_1 \d \xi_2 \d \xi_3 \d \xi_4 \Bigg(\int \d x^2  \mathrm{e}^{-ix^2r^2} \\
&\big( \overline{U}^{(\nu_\mu)}(\xi_4) \gamma^\alpha(1-\gamma_5)W^{(\mu)}(x^2,\xi_2)\big)
\big(\overline{U}^{(e)}(x^2,\xi_1)\gamma_{\alpha}(1-\gamma_5)W^{(\overline{\nu}_{e})}(\xi_3)\big)\Big)\\                         &\overline{f_{4,t}(\xi_4)}F(\xi_{2},\xi_{4})G(\xi_1,\xi_3)
b_{-}^{*}(\xi_2)b^{*}_{+}(\xi_1)b^*_{-}(\xi_3)\Psi.
\end{split}
\end{equation}

and

\begin{equation}\label{4.66}
  \left[(H_I^{(1)})^*,b_{4,+}(f_{4,t})\right]\Psi=\left[(H_I^{(2)})^*,b_{4,+}(f_{4,t})\right]\Psi=0
\end{equation}

By \eqref{B.5} we get
\begin{equation}\label{4.67}
\begin{split}
&\left\|\left[H_I^{(1)},b_{4,+}(f_{4,t})\right]\Psi\right\|\leq \\
&\int \d x^2 \bigg(\int \d \xi_2 \left| \left\langle \int \d \xi_4 U^{(\nu_\mu)}(\xi_4)f_{4,t}(\xi_4)\mathrm{e}^{ip_{4}^2 x^2} \overline{F(\xi_{2},\xi_{4})}, \gamma^{0}\gamma^\alpha(1-\gamma_5) U^{(\mu)}(x^2,\xi_2) \right\rangle \right|^2 \bigg)^{\frac{1}{2}}\\                                                                                         &\left \|\int \d \xi_1 \d \xi_3 \mathrm{e}^{-ip^2_{3}x^2}\left\langle U^{(e)}(x^2,\xi_1),\gamma^0\gamma_{\alpha}(1-\gamma_5)W^{(\overline{\nu_e})}(\xi_3)G(\xi_1,\xi_3)\right\rangle b_{+}^{*}(\xi_1)b_{-}^*(\xi_3)\Psi\right\|.
\end{split}
\end{equation}
where $\left\langle .,.\right\rangle$ is the scalar product in $\C^4$.

and

\begin{equation}\label{4.68}
\begin{split}
&\left\|\left[H_I^{(2)},b_{4,+}(f_{4,t})\right]\Psi\right\|\leq \\
&\int \d x^2 \bigg(\int \d \xi_2 \left| \left\langle \int \d \xi_4 U^{(\nu_\mu)}(\xi_4)f_{4,t}(\xi_4)\mathrm{e}^{ip_{4}^2 x^2} \overline{F(\xi_{2},\xi_{4})}, \gamma^0 \gamma^\alpha(1-\gamma_5) W^{(\mu)}(x^2,\xi_2) \right\rangle \right|^2 \bigg)^{\frac{1}{2}}\\                                                                                         &\left \|\int \d \xi_1 \d \xi_3 \mathrm{e}^{-ip^2_{3}x^2}\left\langle U^{(e)}(x^2,\xi_1),\gamma^0
\gamma_{\alpha}(1-\gamma_5)W^{(\overline{\nu_e})}(\xi_3)G(\xi_1,\xi_3)\right\rangle b_{+}^{*}(\xi_1)b_{-}^*(\xi_3)\Psi\right\|.
\end{split}
\end{equation}

By adapting the proof of \eqref{4.57} to \eqref{4.67} and \eqref{4.68} one can show that there exist for every $j$ a function, denoted by $H^{\nu_\mu}(\xi_2,\xi_4)$, such that

\begin{equation}\label{4.69}
  \begin{split}
  &\left\|e^{itH}\left[H_I,b_{4,+}(f_{4,t})\right]e^{-itH}\Psi\right\|\leq  \\
  &CC_{f_4}\frac{1}{t^2}\bigg(\sum_{j=1}^4\int \d \xi_2 \d \xi_4 \chi_{f_4}(\xi_4)|H_j^{(\nu_\mu)}(\xi_2,\xi_4)|^2\bigg)^{\frac{1}{2}}\times \\
  &\left\|G(.,.)\right\|_{L^2(\Gamma_1\times\R^3)}\frac{1}{m_e}( \tilde{a}\|H\Psi\| +(\tilde{b}+ m_e)\|\Psi\|)
  \end{split}
\end{equation}

with
\begin{equation*}
  \sum_{j=1}^4\int \d \xi_2 \d \xi_4 \chi_{f_4}(\xi_4)|H_j^{(\nu_\mu)}(\xi_2,\xi_4)|^2<\infty
\end{equation*}
Here $\chi_{f_4}(.)$ is the characteristic function of the support of $f_4(.)$.

Similarly we have

\begin{equation}\label{4.70}
b^*_{4,+,T}(f_4)\Psi- b^*_{4,+,T_{0}}(f_4)\Psi=ig \int_{T_{0}}^{T} e^{itH} [ H_I,b^*_{4,+}(f_{4,t})] e^{-itH}\Psi dt
\end{equation}

with

\begin{equation}\label{4.71}
\begin{split}
&\left[(H_I^{(1)})^*,b_{4,+}^*(f_{4,t})\right]\Psi=
 \int \d \xi_1 \d \xi_2 \d \xi_3 \d\xi_4 \,\Bigg(\int \d x^2 \mathrm{e}^{ix^2r^2} \\
&\big( \overline{W}^{(\overline{\nu}_e)}(\xi_3)\gamma^\alpha(1-\gamma_5)U^{(e)}(x^2,\xi_1)\big)
\big(\overline{U}^{(\mu)}(x^2,\xi_2)\gamma_\alpha(1-\gamma_5)U^{(\nu_\mu)}(\xi_4)\big)\Big)\\                                                                         &\overline{F(\xi_{2}, \xi_{4})}\,\overline{G(\xi_{1}, \xi_{3})}f_{4,t}(\xi_4)
b_{+}^*(\xi_2)b_{-}(\xi_3)b_{+}(\xi_1)\Psi.
\end{split}
\end{equation}

\begin{equation}\label{4.72}
  \begin{split}
  &\left[(H_I^{(2)})^{*},b^*_{4,+}(f_{4,t})\right]\Psi=
   \int \d \xi_1 \d \xi_2 \d \xi_3 \d \xi_4\,\Bigg(\int \d x^2  \mathrm{e}^{ix^2r^2} \\
  &\big(\overline{W}^{(\overline{\nu}_e)}(\xi_3) \gamma^\alpha(1-\gamma_5)U^{(e)}(x^2,\xi_1)\big)
  \big(\overline{W}^{(\mu)}(x^2,\xi_2) \gamma_\alpha(1-\gamma_5) U^{(\nu_\mu)}(\xi_4)\big)\Big)\\                                                                    &\overline{F((\xi_{2}, \xi_{4})}\,\overline{G(\xi_1,\xi_3)}f_{4,t}
  b_{-}(\xi_3)b_{+}(\xi_1)b_{-}(\xi_2)\Psi.
  \end{split}
\end{equation}

and

\begin{equation}\label{4.73}
  \left[H_I^{(1)},b_{4,+}^*(f_{4,t})\right]=\left[(H_I^{(2)},b^*_{4,+}(f_{4,t})\right]=0
\end{equation}

By \eqref{B.5} we get
\begin{equation}\label{4.74}
\begin{split}
&\left\|\left[(H_I^{(1)})^*,b^*_{4,+}(f_{4,t})\right]\Psi\right\|\leq \\
&\int \d x^2 \bigg(\int \d \xi_2 \left| \left\langle  U^{(\mu)}(x^2,\xi_2), \gamma^{0}\gamma^\alpha(1-\gamma_5)\int \d \xi_4 U^{(\nu_\mu)}(\xi_4)f_{4,t}(\xi_4)\mathrm{e}^{ip_{4}^2x^2} \overline{F(\xi_{2},\xi_{4})}\right\rangle \right|^2 \bigg)^{\frac{1}{2}}\\                                                                                                   &\left \|\int \d \xi_1 \d \xi_3 \mathrm{e}^{-ip^2_{3}x^2}\left\langle W^{(\overline{\nu_e})}(\xi_3),\gamma^0\gamma_{\alpha}(1-\gamma_5)U^{(e)}(x^2,\xi_1)\overline{G(\xi_1,\xi_3)}\right\rangle b_{+}(\xi_1)b_{-}(\xi_3)\Psi\right\|.
\end{split}
\end{equation}
where $\left\langle .,.\right\rangle$ is the scalar product in $\C^4$.

and

\begin{equation}\label{4.75}
\begin{split}
&\left\|\left[(H_I^{(2  )})^*,b^*_{4,+}(f_{4,t})\right]\Psi\right\|\leq \\
&\int \d x^2 \bigg(\int \d \xi_2 \left| \left\langle  U^{(\mu)}(x^2,\xi_2), \gamma^{0}\gamma^\alpha(1-\gamma_5)\int \d \xi_4 U^{(\nu_\mu)}(\xi_4)f_{4,t}(\xi_4)\mathrm{e}^{ip_{4}^2x^2} \overline{F(\xi_{2},\xi_{4})}\right\rangle \right|^2 \bigg)^{\frac{1}{2}}\\                                                                                                   &\left \|\int \d \xi_1 \d \xi_3 \mathrm{e}^{-ip^2_{3}x^2}\left\langle W^{(\overline{\nu_e})}(\xi_3),\gamma^0\gamma_{\alpha}(1-\gamma_5)U^{(e)}(x^2,\xi_1)\overline{G(\xi_1,\xi_3)}\right\rangle b_{+}(\xi_1)b_{-}(\xi_3)\Psi\right\|.
\end{split}
\end{equation}

By adapting the proof of \eqref{4.57} and \eqref{4.67} to \eqref{4.74} and \eqref{4.75} one gets

\begin{equation}\label{4.76}
  \begin{split}
  &\left\|e^{itH}\left[H_I,b^*_{4,+}(f_{4,t})\right]e^{-itH}\Psi\right\|\leq  \\
  &CC_{f_4}\frac{1}{t^2}\bigg(\sum_{j=1}^4\int \d \xi_2 \d \xi_4 \chi_{f_4}(\xi_4)|H_j^{(\nu_\mu)}(\xi_2,\xi_4)|^2\bigg)^{\frac{1}{2}}\times \\
  &\left\|G(.,.)\right\|_{L^2(\Gamma_1\times\R^3)}\frac{1}{m_e}( \tilde{a}\|H\Psi\| +(\tilde{b}+ m_e)\|\Psi\|)
  \end{split}
\end{equation}

It follows from \eqref{4.63},\eqref{4.69},\eqref{4.70} and \eqref{4.76} that the strong limits of $b^\sharp_{4,+,t}(f_4)$ exist  when t goes to $\pm\infty$, for all $f_4\in L^2(\R^3)$ and for every $g \leq g_0$.

This concludes the proof of theorem 4.3.

\end{proof}

\subsection{Existence of a Fock space subrepresentation of the asymptotic CAR}\mbox{}

From now on we only consider the case where the time t goes to $+\infty$. The following proposition is an easy consequence of theorem 4.1.

\begin{proposition}\label{4.5}\mbox{}

Suppose Hypothesis 2.1-Hypothesis 4.2 and $g \leq g_1$. We have

 i)Let $f_1,g_1,f_2,g_2 \in L^2(\Gamma_1)$ and $f_3,g_3,f_4,g_4 \in L^2(\R^3)$. The following anticommutation relations hold in the sense of quadratic form.
\begin{equation*}
 \begin{split}
   \{b_{1,+,\infty}(f_1),b_{1,+,\infty}^{*}(g_1)\}=& \langle f_1,g_1\rangle_{L^2(\Gamma_1)}\mathbf{1} \\
   \{b_{2, \epsilon,\infty}(f_2),\,b_{2, \epsilon',\infty}^*(g_2)\} =&\langle f_2,g_2\rangle_{L^2(\Gamma_1)}\delta_{\epsilon\epsilon'}\mathbf{1} \\
   \{b_{3,-,\infty}(f_3),b_{3,-,\infty}^{*}(g_3)\}=& \langle f_3,g_3\rangle_{L^2(\R^3)}\mathbf{1} \\
   \{b_{4,+,\infty}(f_4),b_{4,+,\infty}^{*}(g_4)\}=& \langle f_4,g_4\rangle_{L^2(\R^3)}\mathbf{1}\\
   \{b_{1,+,\infty}(f_1),b_{1,+,\infty}(g_1)\}=&\{b_{1,+,\infty}^*(f_1),b_{1,+,\infty}^{*}(g_1)\}= 0\\
  \{b_{1,+,\infty}(f_1),\,b_{2,\epsilon,\infty}^{\sharp}(f_2)\}=&\{b_{1,+,\infty}(f_1),b_{3,-,\infty}^{\sharp}(f_3)\}=0\\
  \{b_{1,+,\infty}(f_1),b_{4,+,\infty}^{\sharp}(f_4)\}=&0.\\
  \{b_{2, \epsilon,\infty}(f_2),\,b_{2, \epsilon',\infty}(g_2)\} =&\{b_{2, \epsilon,\infty}^*(f_2),\,b_{2, \epsilon',\infty}^*(g_2)\} =0\\
  \{b_{2, \epsilon,\infty}(f_2),\,b_{3,-,\infty}^\sharp(f_2)\} =&\{b_{2, \epsilon,\infty}(f_2),\,b_{4,+,\infty}^\sharp(f_4)\} =0\\
  \{b_{3,-,\infty}(f_3),b_{3,-,\infty}(g_3)\}=&\{b_{3,-,\infty}^*(f_3),b_{3,-,\infty}^{*}(g_3)\}=0\\
  \{b_{3,-,\infty}(f_3),b_{4,+,\infty}^\sharp(f_4)\}=&0\\
  \{b_{4,+,\infty}(f_4),b_{4,+,\infty}(g_4)\}=&\{b_{4,+,\infty}^*(f_4),b_{4,+,\infty}^{*}(g_4)\}=0.
 \end{split}
\end{equation*}
Here $\epsilon=\pm$.

ii)
\begin{equation*}
 \begin{split}
  e^{itH}b_{1,+,\infty}^{\sharp}(f_1)&=b_{1,+,\infty}^\sharp(e^{i\omega(\xi_1)t}f_1)e^{itH}.  \\
  e^{itH}b_{2,\pm,\infty}^{\sharp}(f_2)&=b_{2,\pm,\infty}^\sharp(e^{i\omega(\xi_2)t}f_2)e^{itH}.\\
  e^{itH}b_{3,-,\infty}^{\sharp}(f_3)&=b_{3,-,\infty}^\sharp(e^{i\omega(\xi_3)t}f_3)e^{itH}.\\
  e^{itH}b_{4,+,\infty}^{\sharp}(f_4)&=b_{4,+,\infty}^\sharp(e^{i\omega(\xi_4)t}f_4)e^{itH}.
   \end{split}
\end{equation*}

and the following pulltrough formulae are satisfied:

\begin{align*}
  [H,b_{1,+,\infty}^{*}(f_1)]= b_{1,+,\infty}^{*}(\omega(\xi_1)f_1), & \; [H,b_{1,+,\infty}(f_1)]=- b_{1,+,\infty}(\omega(\xi_1)f_1) \\
  [H,b_{2,\pm ,\infty}^{*}(f_2)]= b_{2,\pm ,\infty}^{*}(\omega(\xi_2)f_2), & \; [H,b_{2,\pm ,\infty}(f_2)]=- b_{2,\pm ,\infty}(\omega(\xi_2)f_2) \\
  [H,b_{3,-,\infty}^{*}(f_3)]= b_{3,-,\infty}^{*}(\omega(\xi_3)f_3), & \; [H,b_{3,-,\infty}(f_3)]=- b_{3,-,\infty}(\omega(\xi_3)f_3) \\
  [H,b_{4,+,\infty}^{*}(f_4)]= b_{4,+,\infty}^{*}(\omega(\xi_4)f_4), & \; [H,b_{4,+,\infty}(f_4)]=- b_{4,-,\infty}(\omega(\xi_4)f_4).
\end{align*}

iii)
\begin{equation*}
  b_{1,+,\infty}(f_1)\Omega_g=b_{2,\pm ,\infty}(f_2)\Omega_g=b_{3,-,\infty}(f_3)\Omega_g=b_{4,+,\infty}(f_4)\Omega_g=0
\end{equation*}
Here $\Omega_g$ is the ground state of H.

\end{proposition}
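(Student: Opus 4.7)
The plan is to establish the three parts in turn. For (i), I would observe that at each finite $t$, $b_{i,\epsilon,t}^{\sharp}(f)=U(t)\,b_{i,\epsilon}^{\sharp}(f)\,U(t)^{*}$ with $U(t)=e^{itH}e^{-itH_0}$ unitary, so unitary conjugation preserves all algebraic relations. At $t=0$ the relations hold by construction: canonical anticommutation within each species, and vanishing anticommutators across species (the multi-species convention implicit in the construction of $\mathfrak{F}$). By Theorem 4.1 the operators $b_{\cdot,\cdot,t}^{\sharp}(f)$ are uniformly bounded (by $\|f\|$) and converge strongly on the dense set $D(H)$; hence their products converge strongly on $D(H)$ as well and the CAR pass to the strong limit as identities of quadratic forms.

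For (ii), I would first derive the intertwining by combining the free identity $e^{itH_0}b_{1,+}^{\sharp}(h)e^{-itH_0}=b_{1,+}^{\sharp}\bigl(e^{i\omega(\xi_1)t}h\bigr)$, which follows directly from the explicit form of $H_{0,D}^{(e)}$, with the definition of the time-dependent operators to obtain
\[
e^{itH}\,b_{1,+,s}^{\sharp}(f_1)\,e^{-itH}=b_{1,+,s+t}^{\sharp}\bigl(e^{i\omega(\xi_1)t}f_1\bigr).
\]
Sending $s\to+\infty$ and invoking Theorem 4.1 together with the strong continuity of $e^{\pm itH}$ yields the first stated intertwining; the other three are identical. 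Differentiating this identity at $t=0$ on the dense subspace of smooth $f$ with $\omega f\in L^{2}$ produces the pull-through formulae, the sign difference between creation and annihilation operators arising from the antilinearity of $b_{i,\epsilon}(\cdot)$ versus the linearity of $b_{i,\epsilon}^{*}(\cdot)$.

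For (iii), I focus on the electron; the other species are identical. I would define the bounded linear map $S\colon L^{2}(\Gamma_1)\to\mathfrak{F}$ by $Sf=b_{1,+,\infty}(\overline{f})\Omega_g$ (conjugation converting the antilinearity of $b$ into the linearity of $S$), so $\|S\|\le 1$ by (i). Using (ii) and $H\Omega_g=E\Omega_g$,
\[
(H-E)\,Sf=-\,S\bigl(\omega(\xi_1)f\bigr),\qquad f\in D(\omega(\xi_1)),
\]
which in particular places $Sf\in D(H)$. Composing with $S^{*}$ yields the operator identity $S^{*}(H-E)S=-B\,\omega(\xi_1)$ on $D(\omega(\xi_1))$, where $B:=S^{*}S\ge 0$ is bounded. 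The left-hand side is symmetric as a quadratic form, which forces the commutation $[B,\omega(\xi_1)]=0$. Then for $g\in D(\omega(\xi_1))$,
\[
0\le\langle Sg,(H-E)Sg\rangle=-\langle g,B\,\omega(\xi_1)g\rangle=-\int\omega(\xi_1)\,|B^{1/2}g(\xi_1)|^{2}\,\d\xi_1\le 0,
\]
so $\omega(\xi_1)\,|B^{1/2}g|^{2}=0$ almost everywhere. Since $\omega(\xi_1)=E_n^{(e)}(p^{3})\ge m_e>0$, this gives $Sg=0$, hence $b_{1,+,\infty}(f_1)\Omega_g=0$ on the dense set $D(\omega(\xi_1))$ and then everywhere by boundedness. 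For the muon one uses $\omega(\xi_2)\ge m_\mu>0$; for the massless neutrinos, $\omega(\xi_j)=|\textbf{p}_j|$ vanishes only on a Lebesgue-null subset of $\R^{3}$, which is equally sufficient.

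The main obstacle I expect is the domain bookkeeping in (iii): justifying $S\,D(\omega(\xi_1))\subset D(H)$ and extracting the commutation $[B,\omega(\xi_1)]=0$ cleanly from the operator identity. Both are controlled by the pull-through formula from (ii), which puts $b_{1,+,\infty}(\overline{f})\Omega_g$ into $D(H)$ as soon as $\omega\overline{f}\in L^{2}$ (the right-hand side then lying in $\mathfrak{F}$), and by the standard symmetry of the quadratic form of a self-adjoint operator applied on a common dense domain.
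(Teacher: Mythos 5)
The paper itself gives no argument here: it merely asserts that the Proposition is ``an easy consequence of theorem~4.1.'' Your proof fills that gap correctly, and the three parts are handled in the right order of difficulty. Parts (i) and (ii) are indeed routine: (i) follows because each $b^{\sharp}_{\cdot,\cdot,t}(f)=U(t)\,b^{\sharp}_{\cdot,\cdot}(f)\,U(t)^{*}$ with $U(t)=e^{itH}e^{-itH_0}$ unitary, so the CAR (including the cross-species anticommutation built into the construction of $\gF$, cf.\ \eqref{A.4}, \eqref{B.4}) hold for every finite $t$, and strong limits of uniformly bounded nets respect products, hence anticommutators; (ii) is exactly the finite-$t$ identity $e^{itH}b^{\sharp}_{\cdot,s}(f)e^{-itH}=b^{\sharp}_{\cdot,s+t}(e^{i\omega t}f)$ (which I checked reduces, via \eqref{4.5}, \eqref{4.48}, to a conjugation by $e^{-isH_0}$) followed by $s\to\infty$, and differentiation at $t=0$ then produces the pull-through formulae with the expected antilinearity-induced sign.

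Part (iii) is the only place genuine work is needed, and your $S$-operator argument is a clean and correct reconstruction of the standard ``no states below the ground state'' mechanism. Two points deserve to be made explicit, both of which you anticipate but compress. First, $Sf\in D(H)$ for $f\in D(\omega)$ is not given by the pull-through formula itself but by the differentiability at $t=0$ of $t\mapsto b_{1,+,\infty}(e^{i\omega t}\bar f)e^{itH}\Omega_g = e^{itH}b_{1,+,\infty}(\bar f)\Omega_g$ established in (ii) (the right side being differentiable since $\omega\bar f\in L^2$ makes the operator norm of $b_{1,+,\infty}(e^{i\omega t}\bar f)-b_{1,+,\infty}(\bar f)+it\,b_{1,+,\infty}(\omega\bar f)$ vanish to order $o(t)$, and $e^{itH}\Omega_g$ is trivially smooth). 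Second, the step from symmetry of $S^{*}(H-E)S$ to $[B,\omega]=0$ should be spelled out as: $\langle f,B\omega g\rangle=\langle\omega f,Bg\rangle$ for all $f,g\in D(\omega)$ forces $Bg\in D(\omega^{*})=D(\omega)$ and $\omega Bg=B\omega g$, i.e.\ $B\omega\subset\omega B$; this in turn commutes $B^{1/2}$ with $\omega$ on $D(\omega)$ by a standard strong-limit argument, which is what licenses $\langle g,B\omega g\rangle=\int\omega\,|B^{1/2}g|^{2}$. With those two sentences added, the proof is complete. Your observation that the massless case works because $\{|\mathbf p|=0\}$ is Lebesgue-null is exactly the point that makes the absence of an infrared regularization harmless here.
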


Our main result is the following theorem

\begin{theorem}\label{4.5}
Suppose Hypothesis 2.1-Hypothesis 4.2 and $g \leq g_1$. Then we have
\begin{equation*}
  \sigma_{ac}=[E,\infty).
\end{equation*}                                                                                                                                                       \end{theorem}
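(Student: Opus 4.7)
The plan is to exhibit a family of vectors in $\gF$ whose spectral measures for $H$ are absolutely continuous and whose supports jointly cover $[E,\infty)$. Combined with the trivial inclusion $\sigma_{ac}(H)\subseteq\sigma(H)=[E,\infty)$ from Theorem~\ref{2.3}, this yields the reverse inclusion, hence the claimed equality.

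I would work in the massless antineutrino channel (channel $3$); the massless neutrino channel $4$ would work identically, and exploiting a massless particle is convenient because the corresponding dispersion $\omega(\xi_3)=|\mathbf{p}_3|$ is already absolutely continuous and surjective onto $[0,\infty)$. For $f\in L^2(\R^3)$ set
\begin{equation*}
\Psi_f:=b^{*}_{3,-,\infty}(f)\,\Omega_g.
\end{equation*}
The anticommutation relations of Proposition~4.5 (i) together with the vacuum property $b_{3,-,\infty}(f)\Omega_g=0$ of Proposition~4.5 (iii) give $\|\Psi_f\|=\|f\|_{L^2(\R^3)}$, so $\Psi_f\neq 0$ whenever $f\neq 0$. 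The intertwining identity of Proposition~4.5 (ii), together with $H\Omega_g=E\Omega_g$, yields
\begin{equation*}
e^{-itH}\Psi_f=e^{-iEt}\,b^{*}_{3,-,\infty}\!\bigl(e^{-i|\mathbf{p}|t}f\bigr)\Omega_g,
\end{equation*}
and taking the inner product with $\Psi_f$ while using the CAR a second time and the vacuum property collapses the expression to
\begin{equation*}
\langle\Psi_f,e^{-itH}\Psi_f\rangle=e^{-iEt}\int_{\R^3}|f(\mathbf{p})|^2 e^{-i|\mathbf{p}|t}\,d\mathbf{p}=\int_{[E,\infty)}e^{-i\lambda t}\,d\mu_f(\lambda),
\end{equation*}
where $d\mu_f$ is the push-forward of $|f(\mathbf{p})|^2\,d\mathbf{p}$ under $\mathbf{p}\mapsto E+|\mathbf{p}|$. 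A change to spherical coordinates shows that $d\mu_f$ is absolutely continuous on $[E,\infty)$ with density $(\lambda-E)^2\int_{S^2}|f((\lambda-E)\omega)|^2\,d\sigma(\omega)$; by Stone's theorem $d\mu_f$ is precisely the spectral measure of $H$ at $\Psi_f$, so $\Psi_f$ lies in the absolutely continuous subspace of $H$ and $\mathrm{supp}(d\mu_f)\subseteq\sigma_{ac}(H)$.

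To finish, given any open interval $(a,b)\subset(E,\infty)$, pick $f\in L^2(\R^3)$ supported in the spherical shell $\{a-E<|\mathbf{p}|<b-E\}$ with $|f|$ strictly positive on that shell; then $\mathrm{supp}(d\mu_f)=[a,b]$, whence $[a,b]\subseteq\sigma_{ac}(H)$. Varying $(a,b)$ gives $[E,\infty)\subseteq\sigma_{ac}(H)$, and the upper bound from Theorem~\ref{2.3} completes the proof.

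The substantive content of this argument is already contained in Theorem~\ref{4.1} and Proposition~4.5; once these are in hand the argument above is essentially an algebraic manipulation followed by a Fourier-analytic identification of the spectral measure, and there is no further obstacle. The only delicate step worth flagging is Proposition~4.5 (iii), the statement that the interacting ground state $\Omega_g$ is annihilated by every asymptotic annihilation operator. Without this genuine Fock-subrepresentation property, the CAR collapse in the second display above would leave residual cross-terms coming from $b^{*}_{3,-,\infty}(e^{-i|\mathbf{p}|t}f)b_{3,-,\infty}(f)\Omega_g$, and one would no longer be able to identify the matrix element $\langle\Psi_f,e^{-itH}\Psi_f\rangle$ with a simple Fourier transform, blocking the identification of the spectral measure.
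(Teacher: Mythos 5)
Your proposal is correct, and it takes a genuinely leaner route than the paper's. The paper constructs the full asymptotic Fock space $\mathfrak{F}_\infty$ spanned by arbitrary monomials of asymptotic creation operators applied to $\Omega_g$ (see \eqref{4.80}), sets up the unitary wave operator $W_\infty\colon\mathfrak{F}\to\mathfrak{F}_\infty$ defined basis-vector by basis-vector in \eqref{4.82}--\eqref{4.83}, and then derives the intertwining identity $W_\infty e^{it(H_0+E)}=e^{itH}W_\infty$ in \eqref{4.86}; absolute continuity of $H$ on $[E,\infty)$ follows because $H$ restricted to $\mathfrak{F}_\infty$ is thereby unitarily equivalent to $H_0+E$, whose a.c.\ spectrum is $[E,\infty)$. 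You sidestep all the orthonormal-basis bookkeeping in $\mathfrak{F}^{(p,q,\bar q,r,s)}$ and $\mathfrak{F}_\infty^{(p,q,\bar q,r,s)}$ by picking a single massless channel and computing the spectral measure of the one-particle excitations $\Psi_f=b^*_{3,-,\infty}(f)\Omega_g$ directly, via exactly the same three ingredients the paper uses---the CAR, the intertwining with $e^{itH}$, and the vacuum property---all from Proposition~4.5. Both arguments must appeal to Theorem~\ref{2.3} for the upper bound $\sigma_{ac}(H)\subseteq\sigma(H)=[E,\infty)$. What you lose relative to the paper is that its construction yields the full Fock subrepresentation and the intertwiner $W_\infty$ explicitly, which is the natural launching point for asymptotic completeness; what you gain is a shorter and more transparent derivation of the stated spectral identity. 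One small point you glossed over: varying $(a,b)$ over subintervals of $(E,\infty)$ gives $(E,\infty)\subseteq\sigma_{ac}(H)$, and you need either closedness of $\sigma_{ac}(H)$ or a choice of $f$ supported in a ball around $\mathbf{p}=0$ (so that $\mathrm{supp}(\mu_f)=[E,E+\rho]$) to capture the endpoint $E$ itself; either repair is immediate. Your flagging of Proposition~4.5~(iii) as the load-bearing ingredient is apt---it plays the same silent but essential role in the paper's wave-operator construction, where it guarantees the vectors \eqref{4.80} form an orthonormal system.
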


\begin{proof}

By \eqref{2.2} we have, for all sets of integers $(p,q,\bar q,r,s)$ in $\N^5$,
\begin{equation}\label{4.77}
 \mathfrak{F} = \bigoplus_{(p,q,\bar q,r,s)} \mathfrak{F}^{(p,q,\bar q, r,s)}.
\end{equation}
with
\begin{equation}\label{4.78}
 \mathfrak{F}^{(p,q,\bar q,r,s)}
 =(\otimes _a^{p} L^2(\Gamma_1))\otimes (\otimes _a^{q} L^2(\Gamma_1))\otimes
 (\otimes _a^{\bar q} L^2(\Gamma_1))\otimes
 (\otimes _a^{r} L^2(\R^3))\otimes
 (\otimes _a^{s} L^2(\R^3))\ .
\end{equation}
Here $p$ is the number of electrons, $q$ (resp. $\bar q$) is the number of muons
 (resp. antimuons), $r$
is the number of  antineutrinos $\overline{\nu_e}$ and $s$ is the number of neutrinos $\nu_\mu$.

Let $\{e^1_i|i=1,2,..\}$,$\{e^2_j|j=1,2,..\}$ and $\{f^2_k|k=1,2,..\}$ be tree orthonormal basis of $L^2(\Gamma_1)$. Let  $\{e^3_l|l=1,2,..\}$ and $\{e^4_m|m=1,2,..\}$ be two orthonormal basis of $L^2(\R^3)$.

Consider the following vectors of $\mathfrak{F}$
\begin{multline}\label{4.79}
  \produ_{1\leq\alpha\leq p}b_{1,+}^*(e^1_{i_\alpha})\prod_{1\leq\alpha\leq q}b_{2,+}^*(e^2_{j_\alpha})\prod_{1\leq\alpha\leq \bar q}b_{2,-}^*(f^2_{k_\alpha})
  \produ_{1\leq\alpha\leq r}b_{1,+}^*(e^3_{l_\alpha}) \prod_{1\leq\alpha\leq s}b_{4,+}^*(e^4_{m_\alpha})\Omega
\end{multline}

The indices are assumed ordered, $i_1<...<i_p$, $j_1<...<j_q$, $k_1<...<k_{\bar q}$, $l_1<...<l_r$ and  $m_1<...<m_s$.

The set, for $(p,q,\bar q,r,s)$ given in $\N^5$,
\begin{multline*}
  \mathbb{D}^{(p,q,\bar q,r,s)}= \{\Phi\in \mathfrak{F}^{(p,q,\bar q,r,s)}\;|\;\Phi \; \mbox{is a finite linear combination of basis vectors}\\                                     \mbox{of the form \eqref{4.79}} \}
\end{multline*}
is a dense domain in $ \mathfrak{F}^{(p,q,\bar q,r,s)}$.
The set of vectors of the form \eqref{4.79} is an orthonormal basis of  $\mathfrak{F}^{(p,q,\bar q,r,s)}$ (see \cite[Chapter 10]{Thaller1992}). Hence the vectors obtained in this way for $ p,q,\bar q,r,s,= 0,1,2,...$ form an orthonormal basis of $\mathfrak{F}$ and the set
\begin{multline*}
  \mathbb{D}= \{\Psi\in \mathfrak{F}\;|\;\Psi \; \mbox{is a finite linear combination of basis vectors}\\                                                                           \mbox{of the form \eqref{4.79} for $p,q,\bar q,r,s=0,1,2,\dotsb$} \}
\end{multline*}
is a dense domain in $ \mathfrak{F}$.

On the other hand we now introduce the following vectors of $\mathfrak{F}$

\begin{multline}\label{4.80}
  \produ_{1\leq\alpha\leq p}b_{1,+,\infty}^*(e^1_{i_\alpha})\prod_{1\leq\alpha\leq q}b_{2,+,\infty}^*(e^2_{j_\alpha})\prod_{1\leq\alpha\leq \bar q}b_{2,-,\infty}^*(f^2_{k_\alpha})\\
  \produ_{1\leq\alpha\leq r}b_{3,-,\infty}^*(e^3_{l_\alpha})\prod_{1\leq\alpha\leq s}b_{4,+,\infty}^*(e^4_{m_\alpha})
  \Omega_g
\end{multline}

 Let $\mathfrak{F}_\infty^{(p,q,\bar q,r,s)}$ denote the closed linear hull of vectors of the form \eqref{4.80}. It follows from proposition 4.4 that the set of vectors of the form \eqref{4.80} is an orthonormal basis of  $\mathfrak{F}_\infty ^{(p,q,\bar q,r,s)}$.

 The set, for $(p,q,\bar q,r,s)$ given in $\N^5$,
 \begin{multline*}
  \mathbb{D}_\infty^{(p,q,\bar q,r,s)}= \{\Phi\in \mathfrak{F}_\infty\;|\;\Phi \; \mbox{is a finite linear combination of basis vectors}\\                                          \mbox{of the form \eqref{4.80}}\}.
\end{multline*}
is a dense domain in $ \mathfrak{F}_\infty^{(p,q,\bar q,r,s)}$.

The asymptotic outgoing Fock pace denoted by $\mathfrak{F}_\infty$ is then defined by
\begin{equation}\label{4.81}
 \mathfrak{F}_\infty = \bigoplus_{p,q , \bar q, r, s} \mathfrak{F}_\infty^{(p,q,\bar q,r,s)}.
\end{equation}

The vectors of the form \eqref{4.80} obtained for $ p,q,\bar q,r,s,= 0,1,2,...$ form an orthonormal basis of $\mathfrak{F}$ and the set
\begin{multline*}
  \mathbb{D}_\infty= \{\Phi\in \mathfrak{F}_\infty\;|\;\Phi \; \mbox{is a finite linear combination of basis vectors}\\                                                             \mbox{of the form \eqref{4.80}\,for $p,q,\bar q,r,s= 0,1,2,\dotsb$}\}
\end{multline*}
is a dense domain in $ \mathfrak{F}_\infty$.

We now introduce the following linear operators, denoted by $W^{(p,q,\bar q,r,s)}_\infty$, and  defined on $\mathbb{D}^{(p,q,\bar q,r,s)}$ by
\begin{equation}\label{4.82}
\begin{split}
&W^{(p,q,\bar q,r,s)}_\infty \produ_{1\leq\alpha\leq p}b_{1,+}^*(e^1_{i_\alpha})\prod_{1\leq\alpha\leq q}b_{2,+}^*(e^2_{j_\alpha})\prod_{1\leq\alpha\leq \bar q}b_{2,-}^*(f^2_{k_\alpha})\produ_{1\leq\alpha\leq r}b_{1,+}^*(e^3_{l_\alpha})\\                                                                               &\quad\qquad\qquad\prod_{1\leq\alpha\leq s}b_{4,+}^*(e^4_{m_\alpha})\Omega  \\
&=\produ_{1\leq\alpha\leq p}b_{1,+,\infty}^*(e^1_{i_\alpha})\prod_{1\leq\alpha\leq q}b_{2,+,\infty}^*(e^2_{j_\alpha})\prod_{1\leq\alpha\leq \bar q}b_{2,-,\infty}^*(f^2_{k_\alpha})\produ_{1\leq\alpha\leq r}b_{3,-,\infty}^*(e^3_{l_\alpha})\\                                                                 &\quad\prod_{1\leq\alpha\leq s}b_{4,+,\infty}^*(e^4_{m_\alpha})\Omega_g.
\end{split}
\end{equation}

$W^{(p,q,\bar q,r,s)}_\infty$ can be uniquely extended to linear operators from $\mathbb{D}^{(p,q,\bar q,r,s)}$ to $\mathbb{D}_\infty^{(p,q,\bar q,r,s)}$. It then follows from prposition 4.4. that the operators $W^{(p,q,\bar q,r,s)}_\infty$ can be uniquely extended to unitary operators from $\mathbb{D}^{(p,q,\bar q,r,s)}$ to $\mathbb{D}_\infty^{(p,q,\bar q,r,s)}$

Let
\begin{equation}\label{4.83}
 W_\infty = \bigoplus_{p,q , \bar q, r, s}W^{(p,q,\bar q,r,s)}_\infty .
\end{equation}

Hence $W_\infty$ is a unitary operator from $ \mathfrak{F}$ to $ \mathfrak{F}_\infty$.

The operators $b_{1,+,\infty}(f_1)$,$b_{1,+,\infty}^*(g_1)$,$b_{2,+,\infty}(f_2)$, $b_{2,+,\infty}^*(g_2)$,$b_{2,-,\infty}(f_2)$, $b_{2,-,\infty}^*(g_2)$,$b_{3,-,\infty}(f_3)$, $b_{3,-,\infty}^*(g_3)$, $b_{4,+,\infty}(f_4)$ and $b_{4,+,\infty}^*(g_4)$ defined on $\mathfrak{F}_\infty $ generate a Fock representation of the ACR (see Proposition 4.4 i)).

By proposition 4.4 ii) we have

\begin{equation}\label{4.84}
  \begin{split}
&e^{itH}\produ_{1\leq\alpha\leq p}b_{1,+,\infty}^*(e^1_{i_\alpha})\prod_{1\leq\alpha\leq q}b_{2,+,\infty}^*(e^2_{j_\alpha})\prod_{1\leq\alpha\leq \bar q}b_{2,-,\infty}^*(f^2_{k_\alpha})\produ_{1\leq\alpha\leq r}b_{3,-,\infty}^*(e^3_{l_\alpha})\\                                                                               &\qquad \produ_{1\leq\alpha\leq s}b_{4,+,\infty}^*(e^4_{m_\alpha})\Omega_g \\
&=e^{iEt}\produ_{1\leq\alpha\leq p}b_{1,+,\infty}^*(e^{i\omega(\xi_1)t}e^1_{i_\alpha})\produ_{1\leq\alpha\leq q}b_{2,+,\infty}^*(e^{i\omega(\xi_2)t} e^2_{j_\alpha})\produ_{1\leq\alpha\leq \bar q}b_{2,-,\infty}^*(e^{i\omega(\xi_2)t}f^2_{k_\alpha})\\                                                            &\quad\qquad\produ_{1\leq\alpha\leq r}b_{3,-,\infty}^*(e^{i\omega(\xi_3)t}e^3_{l_\alpha})\produ_{1\leq\alpha\leq s}b_{4,+,\infty}^*(e^{i\omega(\xi_4)t}e^4_{m_\alpha})\Omega_g.
  \end{split}
\end{equation}

Hence $e^{iHt}$ leaves $\mathfrak{F}_\infty$ invariant and $H$ is both reduced by $\mathfrak{F}_\infty$ and $\mathfrak{F}_\infty^{\bot}$. Thus
\begin{equation*}
 \mathfrak{F}\simeq \mathfrak{F}_\infty\oplus\mathfrak{F}_\infty^{\bot}
\end{equation*}

In view of \eqref{4.5}, \eqref{4.48} and \eqref{4.84} we get


\begin{equation}\label{4.85}
  \begin{split}
&W_\infty e^{itH_0}\produ_{1\leq\alpha\leq p}b_{1,+}^*(e^1_{i_\alpha})\prod_{1\leq\alpha\leq q}b_{2,+}^*(e^2_{j_\alpha})\prod_{1\leq\alpha\leq \bar q}b_{2,-}^*(f^2_{k_\alpha})\produ_{1\leq\alpha\leq r}b_{3,-}^*(e^3_{l_\alpha})\\                                                                                             &\qquad \produ_{1\leq\alpha\leq s}b_{4,+}^*(e^4_{m_\alpha})\Omega \\
&=e^{i(H-E)t}W_\infty \produ_{1\leq\alpha\leq p}b_{1,+}^*(e^1_{i_\alpha})\produ_{1\leq\alpha\leq q}b_{2,+,}^*( e^2_{j_\alpha})\produ_{1\leq\alpha\leq \bar q}b_{2,-}^*(f^2_{k_\alpha})\\                                                                                                                   &\quad\qquad\produ_{1\leq\alpha\leq r}b_{3,-}^*(e^3_{l_\alpha})\produ_{1\leq\alpha\leq s}b_{4,+}^*(e^4_{m_\alpha})\Omega.
  \end{split}
\end{equation}

This yields
\begin{equation}\label{4.86}
  W_\infty e^{it(H_0+ E)}= =e^{iHt}W_\infty
\end{equation}
 Hence the reduction of $H$ to $\mathfrak{F}_\infty$ is unitarily equivalent to $H_0 + E$. Thus $ \sigma_{ac}(H)= [E,\infty)$ . This concludes the proof of theorem 4.5.
 \end{proof}

\section*{Acknowledgements.}
J.-C.G. acknowledges J.-M~ Barbaroux, J.~Faupin and G.~Hachem for helpful discussions.

\appendix

\section{The Dirac quantized fields for the electrons and the muons.}\label{Appendix}

\setcounter{equation}{0}

The appendices are based on the section 2 and section 3 of \cite{G17}. See also \cite{KB}, \cite{KBP1}, \cite{KBP2} and \cite{Hachem1993}.

$(s,n,p^{1},p^{3})$ are quantum variables of the electrons,the positrons, the muons and the antimuons in a uniform magnetic field. Here $s=\pm1$, $n\geq0$, $p^{1}\in\R$, $p^{3}\in\R$.

Let $\xi_{1}= (s,n,p^{1}_{e}, p^{3}_{e})$ be the quantum variables of a electron and let $\xi_{2}= (s,n,p^{1}_{\mu}, p^{3}_{\mu})$ be the quantum variables of a muon and of an antimuon.

We set $\Gamma_{1}=\{-1,1\}\times\N\times\R^2$ for the configuration space for the electrons, the muons and the antimuons. $L^2(\Gamma_{1})$ is the Hilbert space associated to each species of fermions.
 \begin{equation}\label{A.1}
   L^2(\Gamma_{1})=  l^2(L^2(\R^2))\oplus l^2(L^2(\R^2)
 \end{equation}

Let $\mathfrak{F}_{e}$ and $\mathfrak{F}_{\mu}$ denote the Fock spaces for the electrons and the muons respectively. Remark that $\mathfrak{F}_{\mu}$ is also the Fock space for the antimuons.

We have

\begin{equation}\label{A.2}
  \mathfrak{F}_{e} =\mathfrak{F}_{\mu}= \bigoplus_{n=0}^{\infty} \bigotimes_a^{n} L^2(\Gamma_{1}).
\end{equation}

$\bigotimes_a^n L^2(\Gamma_{1})$ is the antisymmetric n-th tensor power of $L^2(\Gamma_{1})$.

$\Omega_{\alpha}=(1,0,0,0,...)$ is the vacuum state in $\mathfrak{F}_{\alpha}$ for $\alpha= e, \mu .$

We shall use the notations
\begin{equation}\label{A.3}
  \begin{split}
     \int_{\Gamma_{1}}\d \xi_{1} &=\sum_{s=\pm1} \sum_{n\geq0}\int_{\R^2}\d p_{e}^1\d p_{e}^3\\
    \int_{\Gamma_{1}}\d \xi_{2} &=  \sum_{s=\pm1}\sum_{n\geq0}\int_{\R^2}\d p_{\mu}^1 \d p_{\mu}^3 .
  \end{split}
\end{equation}

$b_{+}(\xi_{1})$ (resp.$b_{+}^*(\xi_{1})$ is the annihilation (resp.creation)operator for the electron.

Let $\epsilon=\pm$.

$b_{\epsilon}(\xi_{2})$ (resp.$b_{\epsilon}^*(\xi_{2})$) are the annihilation (resp.creation)operators  for the muon when $\epsilon=+$ and for the antimuon when  $\epsilon=-$ .

The operators $b_{+}(\xi_{1})$, $b_{+}^*(\xi_{1})$, $b_{\epsilon}(\xi_{2})$ and $b_{\epsilon}^*(\xi_{2})$ fulfil the usual anticommutation relations (CAR)(see \cite{G17}and \cite{WI}).
Therefore the following anticommutation relations hold
\begin{equation}\label{A.4}
  \begin{split}
   &\{ b_{+}(\xi_1), b_{+}^*(\xi'_1)\} = \delta(\xi_1 - \xi'_1) \ ,\\
   &\{ b_{\epsilon}(\xi_2), b_{\epsilon'}^*(\xi'_2)\} =\delta_{\epsilon\epsilon'} \delta(\xi_2 - \xi'_2) \ ,\\
   &\{ b_{+}^{\sharp}(\xi_1), b_{\epsilon'}^{\sharp}(\xi_2)\} = 0  .
  \end{split}
\end{equation}
where $\{ b, b'\}= bb'+b'b$ and $b^{\sharp}= b$ or $b^*$.

In addition, following the convention described in \cite[Section 4.1]{WI} and
\cite[Section 4.2]{WI}, we assume that the fermionic creation and annihilation operators of different species of particles anticommute ( see \cite{BG} arXiv for explicit definitions). In our case this property will be verified by the creation and annihilation operators for the electrons, the muons, the antimuons, the antineutrinos associated to the electron and the neutrinos associated to the muons..

Recall that  for $\vp\in L^2(\Gamma_{1})$, the operators
\begin{equation}\label{A.5}
\begin{split}
  & b_{1,+}(\vp) = \int_{\Gamma_1} b(\xi_1) \overline{\vp(\xi_1)} \d \xi_1 . \\
  & b^*_{1,+}(\vp) = \int_{\Gamma_1} b^*(\xi_1) {\vp(\xi_1)} \d \xi_1.  \\
  & b_{2,\epsilon}(\vp) = \int_{\Gamma_1} b_{\epsilon}(\xi_2) \overline{\vp(\xi_2)} \d \xi_2 . \\
  & b^*_{2,\epsilon}(\vp) = \int_{\Gamma_1} b_{\epsilon}^*(\xi_2) {\vp(\xi_2)} \d \xi_2  .
\end{split}
\end{equation}
are bounded operators on $\mathfrak{F}_{(e)}$ and $\mathfrak{F}_{(\mu)}$  respectively satisfying

\begin{equation}\label{A.6}
  \begin{split}
  \| b^{\sharp}_{1,+}(\vp)\|&= \|\vp\|_{L^2}\ \\
  \| b^{\sharp}_{2,\epsilon}(\vp)\|&= \|\vp\|_{L^2}\
  \end{split}
\end{equation}

Set $\alpha=e,\mu$. The Dirac quantized fields for the electron and the muon, denoted by $\Psi_{(\alpha)}(\textbf{x})$, are given by

\begin{equation}\label{A.7}
  \begin{split}
   \Psi_{(\alpha)}(\mathbf{x}) = & \frac{1}{2\pi}
 \int\d \xi_1
 \Big(\mathrm{e}^{i(p_\alpha^1x^1+ p_\alpha^3x^3)} U^{(\alpha)}(x^2,\xi_\alpha)b_{+}(\xi_{\alpha})\\
 &\quad\quad\quad\quad\quad+ \mathrm{e}^{-i(p_\alpha^1x^1+ p_\alpha^3x^3)} W^{(\alpha)}(x^2,\xi_\alpha)b_{-}^*(\xi_\alpha)\Big) .
 \end{split}
\end{equation}
where $\xi_e=\xi_1$ and $\xi_\mu=\xi_2$. See \cite{G17}.

For  $\xi_{\alpha}= (s,n,p^{1}_{\alpha}, p^{3}_{\alpha})$ we have $ U^{(\alpha)}(x^2,\xi_\alpha)$=$U^{(\alpha)}_{s}(x^2,n,p_\alpha^1,p_\alpha^3)$.

For $s=1$ and $n\geq1$ $U^{(\alpha)}_{1}(x^2,n,p^1,p^3)$ is given by
\begin{equation}\label{A.8}
  U^{(\alpha)}_{+1}(x^2,n,p^1,p^3)=\biggl(\frac{E_n^{(\alpha)}(p^3)+m_\alpha}{2E_n^{(\alpha)}(p^3)}\biggr)^\frac{1}{2}
   \begin{pmatrix}
                                 I_{n-1}(\xi) \\
                                0 \\
                                \frac{p^3}{E_n^{(\alpha)}(p^3)+ m_\alpha}I_{n-1}(\xi)  \\
                                -\frac{\sqrt{2neB}}{E_n^{(\alpha)}(p^3)+ m_\alpha}I_n(\xi) \\
                              \end{pmatrix}
\end{equation}

where

\begin{equation}\label{A.9}
  \begin{split}
    \xi= & \sqrt{eB}(x^2-\frac{p^1}{eB}) \\
        I_n(\xi)=& \bigg(\frac{\sqrt{eB}}{n!2^n\sqrt{\pi}}\bigg)^\frac{1}{2}\exp(-\xi^2/2)H_n(\xi).
    \end{split}
\end{equation}
Here $H_n(\xi)$ is the Hermite polynomial of order n and we define
\begin{equation}\label{A.10}
  I_{-1}(\xi)=0
\end{equation}

For $n=0$ and $s=1$ we set
\begin{equation*}
  U^{(\alpha)}_{+1}(x^2,0,p^1,p^3)= 0
\end{equation*}

For $s=-1$ and  $n\geq0$ $U^{(\alpha)}_{-1}(x^2,n,p^1,p^3)$ is given by
\begin{equation}\label{A.11}
   U^{(\alpha)}_{-1}(x^2,n,p^1,p^3)=\biggl(\frac{E_n^{(\alpha)}(p^3)+m_\alpha}{2E_n^{(\alpha)}(p^3)}\biggr)^\frac{1}{2}
   \begin{pmatrix}
     0 \\
     I_n(\xi) \\
     -\frac{\sqrt{2neB}}{E_n^{(\alpha)}(p^3)+ m_\alpha} I_{n-1}(\xi) \\
     -\frac{p^3}{E_n^{(\alpha)}(p^3)+ m_\alpha}I_{n}(\xi) \\
   \end{pmatrix}
\end{equation}

Through out this work \underline{$e$ will be the positive unit} of charge taken to be equal to the proton charge.

$E_n^{(\alpha)}(p^3)$, $n\geq0$, is given by
\begin{equation}\label{A.12}
  E_n^{\alpha}(p^3)= \sqrt{m_\alpha^2+ (p^3)^2 +2neB}
\end{equation}

Note that
\begin{equation}\label{A.13}
  \int \d x^2 U^{(\alpha)}_{s}(x^2,n,p^1,p^3)^{\dagger} U^{(\alpha)}_{s'}(x^2,n,p^1,p^3)=\delta_{ss'}
\end{equation}
where $\dagger$ is the adjoint in $\C^4$.

In order to study the spectral theory of our Hamiltonian it is not necessary to know  $W^{(e)}(x^2,\xi_1)$ in \eqref{A.6}. We have to know $W^{(\mu)}(x^2,\xi_2)$ explicitly.

For $\xi_{2}= (s,n,p^{1}_{\mu}, p^{3}_{\mu})$ with $n>0$ we have
\begin{equation}\label{A.14}
  \begin{split}
     W^{(\mu)}(x^2,\xi_2)&=V^{(\mu)}_{-1}(x^2,n,-p^1_\mu,-p^3_\mu)\quad for\quad\xi_2=(1,n,p^1_\mu,p^3_\mu),n\geq0.\\
     W^{(\mu)}(x^2,\xi_2)&=V^{(\mu)}_{+1}(x^2,n,-p^1_\mu,-p^3_\mu)\quad for\quad\xi_2=(-1,n,p^1_\mu,p^3_\mu),n\geq1.\\
      W^{(\mu)}(x^2,\xi_2)&=0\quad for \quad \xi_1=(-1,0,p^1_\mu,p^3_\mu).
  \end{split}
\end{equation}

For $s=1$ and $n\geq1$  $V^{(\mu)}_{+1}(x^2,n,p^1,p^3)$ is given by

\begin{equation}\label{A.15}
   V^{(\mu)}_{+1}(x^2,n,p^1,p^3)=\biggl(\frac{E_n^{(\mu)}(p^3)+m_\mu}{2E_n^{(\mu)}(p^3)}\biggr)^\frac{1}{2}
   \begin{pmatrix}
                                -\frac{p^3}{E_n^{(\mu)}(p^3)+ m_\mu}I_{n-1}(\xi) \\
                                 \frac{\sqrt{2neB}}{E_n^{(\mu)}(p^3)+ m_\mu}I_n(\xi) \\
                                  I_{n-1}(\xi) \\
                                  0\\
                              \end{pmatrix}
\end{equation}

and for $n=0$ we set
\begin{equation*}
  V^{(\mu)}_{+1}(x^2,0,p^1,p^3)= 0
\end{equation*}

For $s=-1$ and  $n\geq0$ $V^{(\mu)}_{-1}(x^2,n,p^1,p^3)$ is given by

\begin{equation}\label{A.16}
   V^{(\mu)}_{-1}(x^2,n,p^1,p^3)=\biggl(\frac{E_n^{(\mu)}(p^3)+m_\mu}{2E_n^{(\mu)}(p^3)}\biggr)^\frac{1}{2}
   \begin{pmatrix}
     \frac{\sqrt{2neB}}{E_n^{(\mu)}(p^3)+ m_\mu} I_{n-1}(\xi) \\
      \frac{p^3}{E_n^{(\mu)}(p^3)+ m_\mu}I_{n}(\xi)\\
     0 \\
      I_n(\xi)  \\
   \end{pmatrix}
\end{equation}

Note that
\begin{equation}\label{A.17}
  \int \d x^2 V^{(\mu)}_{s}(x^2,n,p^1,p^3)^{\dagger} V^{(\mu)}_{s'}(x^2,n,p^1,p^3)=\delta_{ss'}
\end{equation}
where $\dagger$ is the adjoint in $\C^4$.\\

\section{The Dirac quantized fields for $\nu_{\mu}$ and $\overline{\nu_{e}}$.}\label{Appendix}
\setcounter{equation}{0}

We suppose that neutrinos and  antineutrinos are massless as in the Standard Model.

The quantum variables of the neutrinos and antineutrinos are the momenta and the helicities.

Let $\mathrm{\mathbf{P}}=(\mathrm{P}^{1},\mathrm{P}^{2},\mathrm{P}^{3})$ be the generators of space-translations.  $\mathrm{H}^{3}$ is the helicity operator $\frac{1}{2}\frac{\mathrm{\mathbf{P}}.\mathrm{\mathbf{\Sigma}}}{|\mathrm{\mathbf{P}}|}$ where $|\mathrm{\mathbf{P}}|=\big(\sqrt{\sum_{i=1}^{3}(\mathrm{P}^{i})^{2}}\big)$  and  $\mathrm{\mathbf{\Sigma}}=(\Sigma^1,\Sigma^2,\Sigma^3)$ with
for $j=1,2,3$
\begin{equation}\label{B.1}
   \Sigma^j =\left(
            \begin{array}{cc}
              \mathbf{\sigma_j} & 0 \\
              0 & \mathbf{\sigma_j} \\
            \end{array}
          \right)
\end{equation}

The helicity of the neutrino associated to the muon is $-\frac{1}{2}$. $\nu_{\mu}$ is left-handed. The helicity of the antineutrino associated to the electron  is $\frac{1}{2}$.$\overline{\nu}_e$ is right-handed.

Let $\xi_{3}= (\mathrm{\mathbf{p}},\frac{1}{2})$ be the quantum variables of the antineutrino $\overline{\nu}_e$  where $\mathrm{\mathbf{p}}\in\R^3$ is the momentum and $\frac{1}{2}$ is the helicity.Let $\xi_{4}= (\mathrm{\mathbf{p}},-\frac{1}{2})$ be the quantum variables of the neutrino $\nu_\mu$  where $\mathrm{\mathbf{p}}\in\R^3$ is the momentum and $-\frac{1}{2}$ is the helicity.

$L^2 (\R^3)$ is the Hilbert space of the states of the neutrinos $\nu_\mu$ and of the antineutrinos $\overline{\nu}_e$.Let $\mathfrak{F}_{(\nu_\mu)}$ and $\mathfrak{F}_{(\overline{\nu}_e)}$ denote the Fock spaces for the neutrinos and the antineutrinos respectively.

We have

\begin{equation}\label{B.2}
  \mathfrak{F}_{\nu_\mu}=\mathfrak{F}_{\overline{\nu}_e}=\bigoplus_{n=0}^\infty \bigotimes_a^nL^2(\R^3) \end{equation}


$\Omega_{\beta}=(1,0,0,0,...)$ is the vacuum state in $\mathfrak{F}_{\beta}$ for $\beta=\overline{\nu_e}, \mu_\mu .$

In the sequel we shall use the notations
\begin{equation}\label{B.3}
  \begin{split}
    \int_{\R^3}\d \xi_{3} &= \int_{\R^3}\d^3\mathrm{\mathbf{p}} \\
    \int_{\R^3}\d \xi_{4} &= \int_{\R^3}\d^3\mathrm{\mathbf{p}}
  \end{split}
\end{equation}

$b_{-}(\xi_{3})$ and $b_{-}^*(\xi_{3})$) are the annihilation and creation operators for the antineutrino associated to the electron respectively .
$b_{+}(\xi_{4})$ and $b_{+}^*(\xi_{4})$) are the annihilation and creation operators for the neutrino associated to the muon respectively.
The operators $b_{-}^{\sharp}(\xi_{3})$ and $b_{+}^\sharp(\xi_{4})$, fulfil the usual anticommutation relations (CAR) and they anticommute with $b_{+}^{\sharp}(\xi_{1})$ and
$b_{\epsilon}^{\sharp}(\xi_{2})$ according to the convention described in \cite[Section 4.1]{WI}. See \cite{BG} arXiv for explicit definitions.

Therefore the following anticommutation relations hold
\begin{equation}\label{B.4}
  \begin{split}
   &\{ b_{-}(\xi_3), b_{-}^*(\xi'_3)\} = \delta(\xi_3 - \xi'_3) \ ,\\
   &\{ b_{+}(\xi_4),  b_{+}(\xi'_4)\} = \delta(\xi_4 - \xi'_4) \ ,\\
   &\{ b_{-}^{\sharp}(\xi_3), b_{+}^{\sharp}(\xi_4)\}=0 \\
   &\{ b_{-}^{\sharp}(\xi_3), b_{\epsilon}^{\sharp}(\xi_2)\} = \{ b_{-}^{\sharp}(\xi_3), b_{+}^{\sharp}(\xi_1)\} = 0.\\
   &\{ b_{+}^{\sharp}(\xi_4), b_{\epsilon}^{\sharp}(\xi_2)\} = \{ b_{+}^{\sharp}(\xi_4), b_{+}^{\sharp}(\xi_1)\} = 0.
  \end{split}
\end{equation}

Recall that, for $\vp\in L^2(\R^3)$, the operators
\begin{equation}\label{B.5}
\begin{split}
  & b_{4,+}(\vp) = \int_{\R^3} b_{+}(\xi_4) \overline{\vp(\xi_4)} \d \xi_4 . \\
  & b_{3,-}(\vp) = \int_{\R^3} b_{-}(\xi_3)\overline{\vp(\xi_3)} \d \xi_3 . \\
  & b^*_{4,+}(\vp) = \int_{\R^3} b_{+}^*(\xi_4) {\vp(\xi_4)} \d \xi_4 .\\
  & b^*_{3,-}(\vp) = \int_{\R^3} b_{-}^*(\xi_3) {\vp(\xi_3)} \d \xi_3 .
\end{split}
\end{equation}
are bounded operators on $\mathfrak{F}_{(\nu_\mu)}$ and $\mathfrak{F}_{(\overline{\nu_e})}$ respectively  satisfying
\begin{equation}\label{B.6}
  \| b^\sharp_{4}(\vp)\|  =\| b^\sharp_{3}(\vp)\|= \|\vp\|_{L^2}.
\end{equation}

The Dirac quantized fields for the neutrinos and antineutrinos associated to the electron and the muon respectively are denoted by $\Psi_{(\nu_e)}(\mathbf{x})$ and  $\Psi_{(\nu_\mu)}(\mathbf{x})$.

We have

\begin{equation}\label{B.7}
\begin{split}
   \Psi_{(\nu_e)}(\mathbf{x}) = & (\frac{1}{2\pi})^\frac{3}{2}\Bigl(
 \int\d\widetilde{\xi}_3
 e^{i(\mathrm{\mathbf{p}}.\mathrm{\mathbf{x}})} U^{(\nu_e)}(\widetilde{\xi}_3)b_{+}(\widetilde{\xi}_3)\\
 &\qquad\qquad + \int\d\xi_3 e^{-i(\mathrm{\mathbf{p}}.\mathrm{\mathbf{x}})}W^{(\overline{\nu}_e)}(\xi_3)b_{-}^*(\xi_3)\Bigr) .
 \end{split}
\end{equation}
and
\begin{equation}\label{B.8}
\begin{split}
   \Psi_{(\nu_\mu)}(\mathbf{x}) = & (\frac{1}{2\pi})^\frac{3}{2}\Bigl(
 \int\d\xi_4
 e^{i(\mathrm{\mathbf{p}}.\mathrm{\mathbf{x}})} U^{(\nu_\mu)}(\xi_4)b_{+}(\xi_4)\\
 &\qquad\qquad + \int\d\widetilde{\xi}_4 e^{-i(\mathrm{\mathbf{p}}.\mathrm{\mathbf{x}})}W^{(\overline{\nu_\mu})}(\widetilde{\xi}_4)b_{-}^*(\widetilde{\xi}_4)\Bigr) .
 \end{split}
\end{equation}
where $\widetilde{\xi}_3=(\mathrm{\mathbf{p}},-\frac{1}{2})$ and $\widetilde{\xi}_4=(\mathrm{\mathbf{p}},\frac{1}{2})$ with, for $\beta=3,4$,
\begin{equation}\label{B.9}
  \begin{split}
    \int_{\R^3}\d \xi_{\beta} &= \int_{\R^3}\d^3p \\
    \int_{\R^3}\d \widetilde{\xi}_{\beta} &= \int_{\R^3}\d^3p
  \end{split}
\end{equation}
See \cite{DG}.

For the purpose of this paper one only needs to know $W^{(\overline{\nu_e})}(\xi_3)$ and $U^{(\nu_\mu)}(\xi_4)$ explicitly. $ U^{(\nu_e)}(\widetilde{\xi}_3)$ and $W^{(\overline{\nu}_\mu)}(\widetilde{\xi}_4)$ are given in \cite{G17}.

By \cite[(3.6), (3.7), (3.24), (3.32)]{G17}and \cite{Thaller1992} we have

\begin{equation}\label{B.10}
   U^{(\nu_\mu)}(\xi_{4})= U^{(\nu_\mu)}(\mathrm{\mathbf{p}},-\frac{1}{2})=)=\frac{1}{\sqrt{2}}\begin{pmatrix}
                                                                                h_-(\mathrm{\mathbf{p}}) \\
                                                                                - h_-(\mathrm{\mathbf{p}}) \\
                                                                                             \end{pmatrix}
\end{equation}
with
\begin{equation}\label{B.11}
  h_-(\mathrm{\mathbf{p}})=\frac{1}{\sqrt{2|\mathrm{\mathbf{p}}|(|\mathrm{\mathbf{p}}|-p^3)}}\begin{pmatrix}
                                                                                        p^3- |\mathrm{\mathbf{p}}|\\
                                                                                       p^1 + ip^2  \\
                                                                                      \end{pmatrix}
\end{equation}

and for $|\mathrm{\mathbf{p}}|= p^3$ we set
\begin{equation*}
  h_-(\mathrm{\mathbf{p}})=\begin{pmatrix}
                             0 \\
                             1 \\
                           \end{pmatrix}
\end{equation*}

Moreover we have

\begin{equation}\label{B.12}
   W^{(\nu_e)}(\xi_{3})=V^{(\nu_e)}(-\mathrm{\mathbf{p}},\frac{1}{2})=\frac{1}{\sqrt{2}}\begin{pmatrix}
                                                                           - h_+(\mathrm{\mathbf{-p}}) \\
                                                                           h_+(\mathrm{\mathbf{-p}}) \\
                                                                               \end{pmatrix}
\end{equation}
with
\begin{equation}\label{B.13}
  h_+(\mathrm{\mathbf{-p}})=\frac{1}{\sqrt{2|\mathrm{\mathbf{p}}|(|\mathrm{\mathbf{p}}|+p^3)}}\begin{pmatrix}
                                                                                        -p^1+ip^2 \\
                                                                                       |\mathrm{\mathbf{p}}|+p^3  \\
                                                                                      \end{pmatrix}
\end{equation}
and for $|\mathrm{\mathbf{p}}|= -p^3$ we set
\begin{equation*}
  h_+(\mathrm{\mathbf{-p}})=\begin{pmatrix}
                             1 \\
                             0 \\
                           \end{pmatrix}
\end{equation*}

Note that
\begin{equation}\label{B.14}
  \| U^{(\nu_\mu)}(\xi_{4})\|_{\C^4}=\| W^{(\overline{\nu_e})}(\xi_{3})\|_{\C^4}=1
\end{equation}

\end{document}